\providecommand{\tabularnewline}{\\}
\theoremstyle{plain}
\newtheorem{thm}{\protect\theoremname}
\theoremstyle{remark}
\newtheorem{rem}[thm]{\protect\remarkname}
\theoremstyle{plain}
\newtheorem{lem}[thm]{\protect\lemmaname}
\providecommand{\lemmaname}{Lemma}
\providecommand{\remarkname}{Remark}
\providecommand{\theoremname}{Theorem}
\providecommand{\lemmaname}{Lemma}
\providecommand{\remarkname}{Remark}
\providecommand{\theoremname}{Theorem}
\begin{document}
\baselineskip .32in

\title{Inbound Replenishment and Outbound Dispatch Decisions under Hybrid Shipment Consolidation Policies: An Analytical Model and Comparison}
%Comparison of Integrated Inventory Replenishment and Outbound Dispatch Models under Shipment Consolidation: A Martingale Approach

%\author{Bo Wei \\ weibomath@gmail.com
 %  \and S{\i}la \c{C}etinkaya \\ sila@lyle.smu.edu
  %  \and Daren B.H. Cline \\ dcline@stat.tamu.edu
   %}
%\author[$a$]{Bo Wei}
%\author[$b$]{S{\i}la \c{C}etinkaya}
%\author[$c$]{Daren B.H. Cline}
%
%\affil[$a$]{IORA, National University of Singapore, Singapore}
%\affil[$b$]{Engineering Management, Information, and Systems, Southern Methodist University, Dallas, USA}
%\affil[$c$]{Department of Statistics, Texas A\&M University, College Station, USA}

\author{Bo Wei \qquad S{\i}la \c{C}etinkaya\qquad Daren B.H. Cline}

\maketitle
\begin{abstract}
\noindent We consider a distribution warehouse where both the inbound inventory replenishment and outbound dispatch decisions are subject to fixed (as well as per-unit) transportation charges and demand is stochastic. In order to realize scale economies associated with transportation operations both on the outbound and inbound sides, dispatch schedules must be synchronized over time with inventory replenishment decisions.  An immediate delivery policy on the outbound side does not make economic sense because outbound dispatch operations and costs will benefit from a \emph{temporal shipment consolidation policy}. Our particular interest in this setting is the exact modeling and analysis of \emph{hybrid} shipment consolidation policies, in comparison to the  time- and quantity-based counterparts.
%To this end, we propose exact and approximate analytical methods to compute and then compare the cost and delivery performance as well as inventory stagnancy/flow under hybrid policies relative to its alternatives.
Since shipment consolidation prolongs customer waiting and inventory holding, we investigate average delay penalty
%(penalties including both linear and squared delay)
per order and average inventory per time unit as critical measures of  performance of the distribution operation, along with the annual cost. By fixing the expected inbound replenishment and outbound dispatch frequencies, we compare these measures among the alternative ways of operation under hybrid, time-based, and quantity-based policies. This comparison then lends itself to an explicit analytical comparison of average costs under these three policies, without a need for solving the corresponding optimization problems. \\

%Notably, our results offer a full analytical characterization of relative cost performance and demonstrates the impact and value of alternative shipment consolidation policies regardless of the values of model parameters. The results are of practical value in the context of  design and operation of an integrated framework for inventory-transportation systems.\\

\noindent \textbf{Keywords:} Supply chain integration, coordination of inventory and transportation decisions, shipment consolidation, hybrid/time-and-quantity consolidation policy, renewal theory, martingales, truncated random variables, stochastic calculus.
\end{abstract}

%\newpage

%\tableofcontents

%\newpage

\section{Introduction and Motivations}

A growing body of academic publications, along with a particular industry focus, have placed an emphasis on the value of coordination of different \emph{functional specialties} (e.g., finance, marketing, production/inventory, logistics/transportation, etc.)  in the context of supply chain management practices (\cite{axsa15,BGA14,CL00,Marklund11,SKS00,SKMK16,SMA18}).
Notably, coordination of production/inventory and logistics/transportation operations has received a significant attention over the past two decades as more companies have implemented specific programs enabling such coordination, including but not limited to vendor-managed inventory (VMI), third-party logistics (3PL), and time definite delivery (TDD) practices  (\cite{AYK12,CETI04,CML06,CTL08,CL00,GAB14,JCL07,KKO13_IJPE,KKO13_CIE,LCJ03,PCL20}).

The existing body of work supporting the implementation of VMI, 3PL, TDD and similar programs can be categorized as deterministic (e.g., see \cite{CL02,CUEK09,hwang10,JCL07,KKO13_IJPE,LCJ03}) and stochastic (e.g., see \cite{Axsa01,CML06,CTL08,CL00,KCT14,KKO13_CIE,Marklund11,MC10,SKMK16}) demand models where several practical generalizations related to the latter category remain as open and challenging research problems. More specifically,
despite a notable body of literature on the integration of inventory and transportation decisions, there is still a need for exact analytical decision making models considering stochastic demand. In this paper, we revisit a stylized, yet, fundamental, problem setting with the goal of addressing this need and filling a void in the existing literature.

In particular, we consider a stock-keeping point, e.g., a distribution warehouse, where both the inbound inventory replenishment and outbound dispatch decisions are subject to fixed (as well as per-unit) transportation charges. These charges are associated with lump-sum transportation (e.g., set-up, loading, bundling, equipment, etc.) costs of each inbound and outbound delivery, regardless of quantity delivered to satisfy stochastic demands realized over time. Hence, outbound dispatch decisions must be synchronized with inbound inventory replenishment decisions over time so as to realize scale economies associated with logistical operations both on the inbound and outbound sides.

Unlike the traditional approaches to inventory control, an immediate delivery policy on the outbound side does not make economic sense in the context of the problem at hand. This is because of the fixed costs of outbound dispatch operations which will benefit from the implementation of a \emph{temporal shipment consolidation policy}. Such a policy is characterized by active efforts to merge several smaller stochastic demands realized over time into larger dispatch quantities to be shipped as combined loads realizing scale economies (\cite{CB03,HB94,HB95}).

Three kinds of temporal shipment consolidation policies are common in the logistics practice and previous literature (e.g., see \cite{CHB14,CETI04,CB03,CML06,CMW14,CTL08,CL00,HBC20,HB94,HB95,MCB10,SEB18,WCC20}), and they are revisited in this paper:
 (i) Quantity-based policy (QP) under which stochastic demands are held until a target dispatch quantity $q$ is accumulated;
(ii) Time-based policy (TP) under which a consolidated dispatch quantity is released by a pre-determined time (say, periodically, every $T$ time-units; and
(iii) Hybrid policy (HP) or time-and-quantity policy under which the shipper attempts to accumulate a target
dispatch quantity $q$ but releases the existing consolidated load by a predetermined time (say, at most after $T$ time-units since the last dispatch) if the size of the consolidated load has not reached $q$ in a timely fashion.
Clearly, QPs assure transportation scale economies are fully realized whereas TPs ensure timely delivery. HPs are aimed at balancing the economic benefits versus timely delivery trade-offs associated with QPs and TPs (\cite{CETI04,CML06,CMW14,HB94,MCB10,WCC20}). For the case where demand is a Poisson process, as in this paper, by assumption, the policy parameters are such that $T > 0$ and $q$ is a positive integer.

As noted in \cite{CETI04}, temporal shipment consolidation may be implemented on its own without
coordination, a.k.a., a {\em pure} consolidation practice. However, for all practical purposes, it is imperative to consider the impact of temporal shipment consolidation on inventory and dispatch decisions throughout the supply chain. Thus, a more informed approach targeting supply chain coordination is
to integrate/synchronize temporal shipment consolidation with production/inventory and logistics/transportation decisions with a holistic vision. This latter approach is referred to as an
{\em integrated} inventory/consolidation practice (\cite{CETI04}).

Our focus in this paper is the exact modeling of HPs in an integrated  framework for making inbound replenishment and outbound dispatch decisions simultaneously. An accompanying goal is to offer an analytical comparison of the resulting model with the previously developed counterpart models that consider TPs (\cite{CL00}) and QPs (\cite{CML06}). That is, the overall analysis presented here is based on three alternative models of operation to include the integrated inbound replenishment and outbound dispatch models under QPs, TPs, and HPs.

More specifically, we propose exact and approximate analytical methods to compute and then compare the (i) cost performance, (ii) delivery performance, and (iii) inventory stagnancy/flow under HPs relative to its alternatives, i.e., QPs and TPs. Clearly, all three types of shipment consolidation policies are implemented at the expense of prolonged customer waiting and inventory holding before an outbound dispatch is made to satisfy stochastic demands realized over time. Hence, in addition to annual cost, we investigate average delay penalties (including both linear and squared delay) per order and average inventory per time unit as additional critical measures of interest associated with the performance of distribution operation under investigation. By fixing the expected replenishment cycle length (i.e., inbound replenishment frequency) and/or the expected shipment consolidation cycle length (i.e., outbound dispatch frequency),
%(accordingly, the expected inventory replenishment cost per time unit and the expected dispatch cost per time unit are fixed),
we are able to compare these two measures among the three alternative models of operation.

This comparison then lends itself to an explicit analytical comparison of average cost criteria of the three operational models, without a need for solving the corresponding three different optimization problems. Notably, our comparative results lead to a full analytical characterization of cost performance under the alternative models of operation with shipment consolidation. Thus, we fill a gap in literature via a set of analytically provable results which were only illustrated by numerical tests previously in \cite{CML06}. Our analytically provable results justify the impact and value of alternative shipment consolidation policies regardless of the values of model parameters and are of practical value in the context of  design and operation of an integrated framework for inventory-transportation systems.

The remainder of this paper is organized as follows. An overview of previously
established results and our specific contributions relative to the existing literature are summarized in Section \ref{LiterandContri}.
In Section \ref{sec:IM_HP},
we introduce the underlying stochastic processes and operational system characteristics of the integrated model under HP, and then we provide an exact formulation and analysis of the resulting model. Multiple critical performance measures applicable in the context of
the three alternative models of operation (to include the integrated inventory replenishment and outbound dispatch models under QPs, TPs, and HPs) are proposed and the main
comparison results in terms of these measures along with long-run average cost are given in Section \ref{sec:Comparison-of-service}.
Finally, Section \ref{sec:Conclusion} concludes the study with a
discussion of further generalizations and related directions for future research.

\section{Previous Literature and Our Contributions}
\label{LiterandContri}

Research in pure consolidation has been inspirational and instrumental for the development and implementation of
integrated practices and models. Hence, we begin with a review of the previous quantitative modeling efforts in this area in Section \ref{pure} which is followed by a review of the relevant work on integrated consolidation practices in Section \ref{integrated}. An overview of our results and the accompanying practical insights are then discussed in Section \ref{overview}.

\subsection{Related Literature on Pure Consolidation Practices}
\label{pure}

A growing body of literature has focused on development of quantitative models supporting pure consolidation practices since mid 90s. Notably, three related streams of research contribute to this literature: (i) numerical and analytical models aimed at computation of operational policy parameters for implementing pure shipment consolidation practices in industrial logistics (\cite{CHB14,CB03,CMW14,BH02,HB95,HBC20})
and disaster relief (\cite{CL17,ZSHZ19}) applications;
(ii) admission control, cost allocation, and pricing models for managing pure shipment consolidation practices (\cite{SEB18,UB12,WCC20}); and
(iii) performance analysis of pure policies for improving shipment consolidation operations (\cite{HB94,MCB10,WCC20}).
Among these three, the results offered by the last stream of research is the most relevant for our purposes, and, thus, summarized here.

A detailed simulation study in \cite{HB94} reveals that
that QPs achieve the least cost compared with the other policies.
However, in terms of average waiting time, HPs outperform both TPs and QPs. Later, the analytical and numerical analyses  in \cite{CMW14} provide a comparison of the distribution of maximum waiting time ($MWT$) and the average order delay ($AOD$) among QPs, TPS, and HPs, where demand is modeled as a Poisson process and
\begin{equation}
AOD=\frac{\mathbb{E}[\mbox{Cumulative waiting per consolidation cycle}]}{\mathbb{E}[\mbox{Number of orders arriving in a consolidation cycle}]}.
\label{aod-1}
\end{equation}
Here, the term delay refers to the time between arrival and clearing of a realized (i.e., outstanding) demand due to outbound dispatch delay associated with shipment consolidation policy in place. Hence, both $MWT$ and $AOD$ are treated as indicators of \emph{timely service} (i.e., service performance). Under fixed policy parameters, $q$ and/or $T$, the analytical results in \cite{CMW14} indicate that HPs outperform QPs and TPs, not only in terms of the $P(MWT>t)$ for a given $t$; but, also, in terms of $AOD$. Also, the numerical results in \cite{CMW14} demonstrate that for a fixed expected
consolidation cycle length (i.e., dispatch frequency), while QPs perform the best in terms of $AOD$, the general class of HPs performs better than the general class of counterpart TPs. More recently, the full set of analytical results in \cite{WCC20} provide a general analytical characterization of $AOD$,
leading to a complete comparative analysis of alternative renewal-type clearing policies including but not limited to QPs, TPs, and HPs. That is, regardless of the parametric setting at hand, for a fixed expected
consolidation cycle length, QPs are the best, and HPs outperform counterpart TPs in terms of $AOD$. Hence, the analysis in \cite{WCC20} closes the gaps in literature regarding service performance of pure shipment consolidation practices via a complete set of analytically provable results regarding $AOD$ which were only illustrated through
numerical tests in \cite{CMW14}.
%Moreover, it is analytically shown that for a fixed expected consolidation cycle length, the HP with larger quantity parameter would achieve larger $AOD$ than the HP with smaller quantity parameter.

In light of these recent analytical findings illuminating the clear advantages of HPs over TPs, along with the cost versus service trade-offs between QPs and HPs, in the context of pure consolidation practices, there is value in comparing these policies in an integrated framework. Such a comparison happens to be the core objective of the current paper. In order to support this objective, we now discuss additional results from the relevant literature on integrated consolidation practices.

\subsection{Related Literature on Integrated Consolidation Practices}
\label{integrated}

The existing work on integrated consolidation practices was inspired by the problem setting in \cite{CL00} and focuses on computation of policy parameters for simultaneous optimization of inventory replenishment (\cite{Axsa01,CML06,CTL08,CL00,MC10}) or production (\cite{KKO13_IJPE,KKO13_CIE}) quantities along with outbound dispatch decisions to minimize the expected long run average cost per time unit.
Hence, we also consider the problem setting of \cite{CL00} where the problem was motivated in the context of a VMI application in which the upper echelon is a vendor that is
(i) operating the distribution warehouse;
(ii) implementing the TP in order to serve a group of downstream retailers of a market area with Poisson demands; and (iii) aiming to optimally schedule the upstream replenishment quantities and the downstream consolidated shipments.

In traditional inventory control applications, demands are assumed to be satisfied immediately; but, under the stipulations of a VMI program, the vendor has the authority to combine and hold on to the delivery of small orders from the market area for a reasonable amount of time in order to achieve transportation scale economies. This mode of operation leads to a basic push-pull system where the initial stage of the supply chain (vendor) is operated in a push-based
manner while the remaining stages (retailers in the market area) employ a pull-based strategy (\cite{SKS00}) taking advantage of the potential benefits of temporal shipment consolidation practices.

The analysis in \cite{CL00} focuses on the implementation of a TP on the outbound side and relies on an approximate expression of the cost function. The problem setting of \cite{CL00} have been revisited in \cite{CML06} in order to compare the impact of HPs with QPs and TPs, again under Poisson demand. However, the comparative results presented therein also rely on approximate cost expression of \cite{CL00} along with a simulation study without any analytical results applicable to HPs. Two significant generalizations of the problem to consider general renewal demand processes and common-carrier costs have been studied in  \cite{CTL08} and \cite{MC10}, respectively; again, developing approximate expressions of cost functions and without any consideration of HPs. Hence, our focus here is the exact modeling and analysis under a HP in an integrated  framework for making inbound replenishment and outbound dispatch decisions simultaneously. The resulting model is referred to as the integrated model under HP in the remainder of the paper and has been compared with the integrated models under TP (\cite{CL00}) and QP (\cite{CML06}). Our comparative results include both cost-based and service-based indicators of performance for all three models of interest as explained in the next section.

According to the detailed numerical study of \cite{CML06} with approximate and simulation results,
%that focuses on the three integrated models of interest here,
HPs remain advantageous in the context of integrated consolidation practices, too, because they are cost-wise superior to TPs and service-wise superior to QPs. By providing an exact analytical model and analysis of the integrated consolidation practice under HPs, we aim to validate and offer an analytical justification for these observations revealed only numerically in \cite{CML06}. Also, in \cite{CML06}, the service aspect is measured by the long run average cumulative waiting time only
whereas we consider more specific performance indicators, as explained in the next section, leading to a more comprehensive set of comparative results.

\subsection{Overview of Our Results and Accompanying Practical Insights}
\label{overview}

As we have noted earlier, shipment consolidation policies are in place at the expense of both prolonged customer waiting and inventory holding. Hence, in addition to studying the exact cost expressions of integrated models under TPs, QPs, and HPs, we also (i) investigate average delay penalties per order (through $AOD$ as well as $AOSD$ as defined below) and (ii) average inventory per time unit (through $AIR$ as defined below), as critical measures of performance of the integrated distribution operations. More specifically, we are interested in a comparative analysis of the following criteria.
\begin{itemize}
    \item \textbf{Cost:} All three integrated models of interest in this paper take into account for four cost components; namely, inventory replenishment, inventory carrying, dispatch, and customer waiting costs. Our comparative results rely on the computation of exact expressions of these cost components for obtaining the exact expression of expected long run average cost per time unit.
    \item \textbf{Order delay:} We consider two measures in order to quantify order delay.
    \begin{itemize}
    \item[1.] In the spirit of previous work on pure consolidation practices (\cite{CMW14, WCC20}), we consider $AOD$, defined in (\ref{aod-1}), as an indicator of service performance of integrated consolidation practices, too.
    \item[2.] In $AOD$, given by Eq. (\ref{aod-1}), the waiting penalty is assumed to be linear to the delivery delay. However, in some situations, due to the impatience of the customer, it also makes sense to consider the case where the waiting penalty is proportional to the square of the delivery delay encountered due to shipment consolidation practice. The corresponding service criterion we propose for this purpose is then given by
\begin{equation}
AOSD=\frac{\mathbb{E}[\mbox{Cumulative squared waiting per consolidation cycle}]}{\mathbb{E}[\mbox{Number of orders arriving in a consolidation cycle}]}.
\label{aosd-1}
\end{equation}
    \end{itemize}
    \item \textbf{Inventory stagnancy/flow:} Last but not least, we investigate average inventory per time unit, i.e., average inventory rate, denoted by $AIR$ and given by
\begin{equation}
AIR=\frac{\mathbb{E}[\mbox{Cumulative inventory carried per replenishment cycle}]}{\mathbb{E}[\mbox{Replenishment cycle length}]},
\label{air-1}
\end{equation}
as an additional measure of performance.
\end{itemize}

As we have noted earlier, our comparative investigation relies on the computation of exact expressions of the components of expected long run average cost per time unit along with
the exact expressions of $AOD$, $AOSD$, and $AIR$. These expressions include terms involving truncated random variables, and their development rely on the use of a combination of renewal theory, martingales, and stochastic calculus. Our analysis of the resulting expressions also leads to the discovery of new characteristics of truncated random variables involved.  Throughout this paper, for a non-negative integer-valued random variable $X$ and a positive integer $q$,
$
X_q\triangleq\min(X, q)
$
denotes the truncated random variable of interest.

%In the integrated model under HP, to compute the exact expected inventory carrying cost per replenishment cycle (see Section \ref{expected inventory holding}), we apply a renewal argument to derive a renewal type equation. The solution of the renewal type equation first leads to the exact expression of the expected cumulative inventory carried, and, then, the exact cost term of interest. To compute the exact expected customer waiting cost per shipment consolidation cycle under HP (see Section \ref{expected cumulative linear delay}), we utilize a martingale point of view and rely on the aid of the martingale stopping theorem. In order to compute the expected cumulative squared delay per shipment consolidation cycle under HP (see Section \ref{expected squared delay} and Appendix \ref{sec:Squared-Delay}), we rely on stochastic calculus.

 The main contributions of our work can be summarized as follows:
\begin{enumerate}
\item By applying a renewal theoretical framework and utilizing martingales and stochastic calculus, we offer an exact formulation of the integrated model under HP, which was studied via simulation only in the existing literature.
\item By developing and using new and refined properties of truncated Poisson random variables, we are able to show that for a fixed expected consolidation cycle length under the integrated models of interest,
\begin{itemize}
    \item QP performs the best and TP performs the
worst in terms of $AOD$, while HP lies between QP and TP;
\item both QP and HP lead to smaller $AOSD$ than TP; but,
\item QP does not necessarily achieve the smallest $AOSD$, which
is different from the corresponding result in terms of $AOD$
(see Theorems \ref{thm:AOD} and \ref{thm:AOSD}).
\end{itemize}
\item Moreover, relying on a reasonable approximation technique,
%similar to the one in \cite{CL00}
we are able to show that for a fixed expected replenishment cycle length and a fixed expected consolidation cycle length under the integrated practice of interest,
QP has smaller $AIR$ than both HP and TP in an approximate fashion, and $AIR$ under HP is approximately the same as that under TP (see Theorem \ref{thm:AIR comparison}).
%\item Hence, for a fixed expected shipment consolidation cycle length,  we offer a complete analytical comparison of the three integrated models of operation  under QPs, TPs, and HPs in terms of $AOD$ and $AOSD$. Moreover, through an approximation, we offer an analytical comparison of these models in terms of $AIR$ under a fixed expected consolidation length and a fixed replenishment cycle length.
\item Building on the analytical comparison results in terms of $AOD/AOSD$ and $AIR$, we are also able to compare the expected long run average cost per time unit for the three integrated models of operation regardless of the parametric setting, without a need to solve the corresponding optimization problems explicitly and, hence, eliminating the burden of a numerical solution approach completely.

\item Considering the critical measures of performance, including cost-based and service-based criteria, we present  insightful guidelines informing the design and operation of integrated inventory-transportation systems.
\end{enumerate}

\section{\label{sec:IM_HP}Integrated Model under HP}
We begin with a more detailed description of the integrated problem setting under HP and introduce the underlying stochastic processes of interest in Section \ref{sec:model_description}. Then we proceed with the model formulation in Section \ref{sec:Analysis of IM_HP}. Throughout the remainder of this paper, for two real numbers $a$ and $b$, let $a\wedge b\triangleq\min(a,b)$.

\subsection{\label{sec:model_description} Problem Setting and Underlying Stochastic Processes}

We consider a distribution warehouse (e.g., vendor) serving a market area, potentially consisting of multiple customers (e.g., retailers) with stochastic demands realized over time. The demand process of each customer is assumed to follow a Poisson process so that the aggregate demand process of the market area, denoted by $N(t)$, is also Poisson, say with rate $\lambda$.
The inventory at the distribution warehouse is replenished instantaneously by incurring a fixed replenishment cost of $A_R$ and a per unit replenishment cost of $c_R$. Any inventory held at the distribution warehouse is subject to an inventory holding cost of $h$ per unit per time unit. The outbound dispatch decisions for transporting goods to satisfy the stochastic demand of market area are also subject to a fixed dispatch cost of $A_D$ along with a per unit dispatch cost of $C_D$ at the expense of the distribution warehouse.  Hence, outbound dispatch schedules must be synchronized over time with inbound inventory replenishment decisions, and outbound dispatch operations and costs take advantage of an explicit shipment consolidation practice under a HP. Since shipment consolidation is implemented at the expense of prolonged customer waiting, outstanding orders waiting to be delivered to the market are subject to a waiting cost penalty of $\omega$ per unit per time unit.

Let $q_{H}$ and $T_{H}$ denote the parameters associated with the HP of interest, under which a dispatch decision
is made every $\tau_{q_{H}}\wedge T_{H}$
time units, where $\tau_{q_{H}}$ is first hitting time of $q_{H}$
with respect to the aggregate Poisson demand process.
By assumption, the policy parameters are such that $T_H > 0$ and $q_H$ is a positive integer.

A shipment consolidation
cycle is then defined as the time between
two successive outbound dispatch decisions. Since the distribution warehouse is able to replenish its inventory in bulk immediately from an external source with plentiful supply, it suffices to review the inventory level at the end of each shipment consolidation cycle. At the review instant, if the on-hand inventory is not sufficient to clear all outstanding orders, the distribution center first replenishes its
stock and then releases the entire consolidated load such that
the remaining on-hand inventory is a positive integer, say $Q_{H}$.

An inventory
replenishment cycle is then defined as the time between two
successive inventory replenishment decisions. Thus, the decision variables are $q_{H}$ and $T_{H}$ which trigger a dispatch decision, and the order-up-to level $Q_{H}$ which indicates the base-stock quantity.

Let $L(t)$ and $I(t)$ denote the size of the consolidated load
waiting to be released and the inventory level at the distribution warehouse at time $t$, respectively. Then, at the end of a shipment consolidation cycle, we have one of the two cases:
\begin{itemize}
\item
%If the amount of on-hand inventory is not enough to clear the entire consolidated load at this time, i.e.,
If $I(t)<L(t)$ at the end of a consolidation cycle then a replenishment quantity of size $Q_{H}+L(t)-I(t)$ is ordered and received immediately. Next, the entire consolidated load is dispatched immediately so that a new replenishment cycle (also a new consolidation cycle) starts
with $Q_{H}$ units of on hand inventory.
\item %If the amount of on-hand inventory is sufficient to deliver the entire consolidated load at this time, i.e.,
If $I(t)\geq L(t)$ at the end of a consolidation cycle, then the consolidated load is dispatched using
the on-hand inventory. The replenishment cycle
continues but a new shipment consolidation cycle starts with $I(t)-L(t)$ units of on-hand inventory.
\end{itemize}

That is, it suffices for the distribution warehouse to employ an $(s,S)$ policy with $s=-1$ and $S=Q_{H}$ for replenishing the inventory. After dispatching the consolidated demand  of the previous consolidation cycle, the inventory on hand is always between $0$ and $Q_{H}$. Thus,  the inventory process $I(t)$ is a regenerative process. The regeneration
points are the epochs at which the vendor's inventory is replenished (the inventory level is renewed to be $Q_{H}$) so that $I(t)$ consists of $i.i.d.$ replenishment cycles.

We let $L_{n}^{R}$ denote the length of the $n$-th replenishment cycle
and $TCost_{n}$ denote the total cost incurred during the $n$-th
replenishment cycle, where $n=1,2,\ldots$. Then, the pairs $\left(L_{n}^{R},\,TCost_{n}\right)$,
$n\geq1$, are $i.i.d.$. Since the order inter-arrival times have
a finite mean, and the costs are assumed to be finite, $\mathbb{E}\left[L_{n}^{R}\right]=\mathbb{E}\left[L_{HP}^{R}\right]<\infty$
and $\mathbb{E}\left[TCost_{n}\right]=\mathbb{E}\left[TCost_{HP}\right]<\infty$,
$n\geq1$. It then follows from the renewal reward theorem (\cite{Ross96}) that the \emph{expected long run average cost per time unit} associated with the operation of the distribution warehouse, denoted by $AC_{HP}(q_{H},T_{H},Q_{H})$  can be computed by using
\begin{equation}
AC_{HP}(q_{H},T_{H},Q_{H})=\frac{\mathbb{E}[TCost_{HP}]}{\mathbb{E}[L_{HP}^{R}]}.\label{eq:average cost rate}
\end{equation}
The optimization problem at hand is to minimize this cost function in order to compute the optimal values of $q_{H},T_{H}$ and $Q_{H}$.

\subsection{\label{sec:Analysis of IM_HP}Model Formulation: Fundamental Expressions of Interest}

Next, we proceed with deriving an explicit expression
of $AC_{HP}(q_{H},T_{H},Q_{H})$ by calculating the expected replenishment
cycle length $\mathbb{E}[L_{HP}^{R}]$ and expected cost of a replenishment
cycle $\mathbb{E}[TCost_{HP}]$.

Clearly, the consolidation
process $L(t)$ is also a regenerative process where successive outbound dispatches represent regeneration epochs. Let $L_{i}^{C}$ denote the
length of the $i$-th consolidation cycle (i.e., time between two
dispatch decisions), and let $N_{i}$ denote the size of the consolidated
load accumulated during the $i$-th consolidation cycle. In our setting, $\{L_{i}^{C}\}_{i\geq 1}$
and $\{N_{i}\}_{i\geq 1}$ are two sequences of $i.i.d.$
random variables. It follows that for all $i\geq1$, $L_{i}^{C}$ has the same probability distribution as $\tau_{q_{H}}\wedge T_{H}$.
For all $i\geq1$,  $N_{i}$ has the same probability distribution as $N(\tau_{q_{H}}\wedge T_{H})$. Note that $N(\tau_{q_{H}}\wedge T_{H})$ can be rewritten as a truncated random variable $Y_{q_{H}}=\min\left(Y,q_{H}\right)$,
where $Y\sim\text{Poisson}(\lambda T_{H})$.

Observe that $N(t)-\lambda t$ is a martingale
and $\tau_{q_{H}}\wedge T_{H}$ is a bounded stopping time. Using
the martingale stopping theorem in \cite[p.300]{Ross96}, we have $\mathbb{E}\left[N(\tau_{q_{H}}\wedge T_{H})\right]=\lambda\mathbb{E}\left[\tau_{q_{H}}\wedge T_{H}\right]$.
Thus, the expected consolidation cycle length is
\begin{equation}
\mathbb{E}\left[L_{i}^{C}\right]=\mathbb{E}\left[L_{HP}^{C}\right]=\mathbb{E}[\tau_{q_{H}}\wedge T_{H}]=\frac{1}{\lambda}\mathbb{E}\left[N(\tau_{q_{H}}\wedge T_{H})\right]=\frac{1}{\lambda}\mathbb{E}\left[Y_{q_{H}}\right],\label{eq:consolidation cycle}
\end{equation}
and the expected number of orders arriving in one consolidation cycle is
\begin{eqnarray}
\mathbb{E}\left[N_{i}\right]=\mathbb{E}\left[Y_{q_{H}}\right]. \label{eq:consolidation size}
\end{eqnarray}

\subsubsection{Expected Replenishment Cycle Length}

Each inventory replenishment cycle consists of at least
one shipment consolidation cycle. Let $K_{H}$ denote the number of shipment
consolidation cycles within a replenishment cycle, which is
defined as
\begin{eqnarray}
\label{KH}
K_{H}\triangleq\min\left\{k\ \text{is\ a\ positive\ integer}:\sum_{j=1}^{k}N_{j}>Q_{H}\right\}.
\end{eqnarray}
The length of an inventory replenishment cycle in the integrated model under HP with parameters $q_{H}$ and $T_{H}$ is $L_{HP}^{R}=\sum_{j=1}^{K_{H}}L_{j}^{C}$.
We observe that $\sum_{j=1}^{K_{H}}L_{j}^{C}$ is a finite stopping
time with respect to $N(t)$, and for any $t>0$,
\[
\Big|N\left(\sum_{j=1}^{K_{H}}L_{j}^{C}\wedge t\right)-\lambda\sum_{j=1}^{K_{H}}L_{j}^{C}\wedge t\Big|\leq q_{H}K_{H}+\lambda T_{H}K_{H},
\]
where the random variable in the right hand side is integrable. Thus,
by Proposition 2.5.7(iii) in \cite[p.65]{AL06}, $\left\{ N\left(\sum_{j=1}^{K_{H}}L_{j}^{C}\wedge t\right)-\lambda\sum_{j=1}^{K_{H}}L_{j}^{C}\wedge t\right\} _{t\geq0}$
is a uniformly integrable martingale. Moreover, by the martingale
stopping theorem and Vitali convergence theorem (the uniform integrability convergence theorem) in \cite[p.105]{Rosenthal2006}, we have
\[
\mathbb{E}\left[N\left(\sum_{j=1}^{K_{H}}L_{j}^{C}\right)\right]=\lambda\mathbb{E}\left[\sum_{j=1}^{K_{H}}L_{j}^{C}\right].
\]
It then follows that
\begin{equation}
\mathbb{E}\left[L_{HP}^{R}\right]=\mathbb{E}\left[\sum_{j=1}^{K_{H}}L_{j}^{C}\right]=\frac{1}{\lambda}\mathbb{E}\left[N\left(\sum_{j=1}^{K_{H}}L_{j}^{C}\right)\right]=\frac{1}{\lambda}\mathbb{E}\left[\sum_{j=1}^{K_{H}}N_{j}\right]=\frac{1}{\lambda}\mathbb{E}\left[K_{H}\right]\mathbb{E}\left[Y_{q_{H}}\right],\label{eq:replenishment cycle}
\end{equation}
where the last equality follows from the Wald's equation (\cite{Ross96}) because
$K_{H}$ is a stopping time with respect to the sequence $\{N_{j}\}_{j\geq 1}$. With this observation in mind, we compute $\mathbb{E}[K_{H}]$ next.

Using the definition of $K_{H}$, we then have $\left\{K_{H}\geq k\right\}\Leftrightarrow\left\{\sum_{j=1}^{k-1}N_{j}\leq Q_{H}\right\}$
for all $k\in\left\{ 1,2,\ldots\right\} $. Let $G_{H}(\cdot)$ as
the distribution function of $Y_{q_{H}}$ and $G_{H}^{(k)}(\cdot)$
as the $k$-fold convolution of $G_{H}(\cdot)$. Moreover, we define
$M_{G_{H}}(i)\triangleq\sum_{k=0}^{\infty}G_{H}^{(k)}(i)$. Then we
have $\mathbb{P}(K_{H}\geq k)=G_{H}^{(k-1)}(Q_{H})$ for all $k\in\left\{ 1,2,\ldots\right\} $
and
\begin{equation}
\mathbb{E}[K_{H}]=\sum_{k=1}^{\infty}\mathbb{P}(K_{H}\geq k)=\sum_{k=1}^{\infty}G_{H}^{(k-1)}(Q_{H})=1+\sum_{k=1}^{\infty}G_{H}^{(k)}(Q_{H})=M_{G_{H}}(Q_{H}).\label{eq:EK}
\end{equation}
We substitute Eq.~\eqref{eq:EK} into Eq.~\eqref{eq:replenishment cycle}
and obtain
\begin{equation}
\mathbb{E}\left[L_{HP}^{R}\right]=\frac{1}{\lambda}\mathbb{E}\left[Y_{q_{H}}\right]M_{G_{H}}(Q_{H}).\label{eq:expected replenish cycle length}
\end{equation}

\subsubsection{Expected Inventory Replenishment Cost per Replenishment cycle}

Let $\mathbb{E}[RCost_{HP}]$ denote the replenishment cost per replenishment
cycle. Since $K_{H}$ is a stopping time with respect to the sequence $\{N_{j}\}_{j\geq 1}$,
by the Wald's equation, we have
\[
\mathbb{E}[\text{Replenishment\ \ Quantity}]=\mathbb{E}\left[\sum_{j=1}^{K_{H}}N_{j}\right]=\mathbb{E}[K_{H}]\mathbb{E}\left[Y_{q_{H}}\right],
\]
so that
\begin{align}
\mathbb{E}[RCost_{HP}] & =A_{R}+c_{R}\mathbb{E}[\text{Replenishment\ \ Quantity}] =A_{R}+c_{R}\mathbb{E}[K_{H}]\mathbb{E}\left[Y_{q_{H}}\right].\label{eq:replenishment cost}
\end{align}

\subsubsection{Expected Inventory Carrying Cost per Replenishment Cycle and $AIR$}\label{expected inventory holding}

Under the HP, the inventory dynamics
within a replenishment cycle is as follows,
\[
I(t)=\begin{cases}
Q_{H}, & 0\leq t\leq L_{1}^{C},\\
Q_{H}-N_{1}, & L_{1}^{C}<t\leq\sum_{j=1}^{2}L_{j}^{C},\\
\vdots\\
Q_{H}-\sum_{j=1}^{K_{H}-1}N_{j}, & \sum_{j=1}^{K_{H}-1}L_{j}^{C}<t\leq\sum_{j=1}^{K_{H}}L_{j}^{C}.
\end{cases}
\]
Letting  $\mathbb{E}[H_{HP}]$ denote the expected cumulative inventory carrier per replenishment cycle, we have
\[
\mathbb{E}\left[H_{HP}\right]\triangleq H(Q_{H},q_{H},T_{H})=\mathbb{E}\left[\int_{0}^{\sum_{j=1}^{K_{H}}L_{j}^{C}}I(t)dt\right].
\]
Using the renewal argument,
\begin{align*}
H(Q_{H},q_{H},T_{H}|N_{1}=i) & =\begin{cases}
\mathbb{E}\left[\tau_{q_{H}}\wedge T_{H}\right]Q_{H}, & \mbox{if}\quad i\geq Q_{H}+1,\\
\mathbb{E}\left[\tau_{q_{H}}\wedge T_{H}\right]Q_{H}+H(Q_{H}-i,q_{H},T_{H}), & \mbox{if}\quad i\leq Q_{H},
\end{cases}
\end{align*}
so that
\[
H(Q_{H},q_{H},T_{H})=\mathbb{E}\left[\tau_{q_{H}}\wedge T_{H}\right]Q_{H}+\sum_{i=0}^{Q_{H}}H(Q_{H}-i,q_{H},T_{H})g_{H}(i),
\]
where $g_{H}(\cdot)$ denotes the probability mass function of $Y_{q_{H}}$.
We further denote $g_{H}^{(k)}(\cdot)$ as the $k$-fold convolution
of $g_{H}(\cdot)$ and define $m_{g_{H}}(i)\triangleq\sum_{k=0}^{\infty}g_{H}^{(k)}(i)$.
The expression for $H(Q_{H},q_{H},T_{H})$ is a renewal type equation,
its solution is given as
\begin{align}
H(Q_{H},q_{H},T_{H}) & =\mathbb{E}\left[\tau_{q_{H}}\wedge T_{H}\right]\sum_{i=0}^{Q_{H}}(Q_{H}-i)m_{g_{H}}(i) =\frac{1}{\lambda}\mathbb{E}[Y_{q_{H}}]\sum_{i=0}^{Q_{H}}(Q_{H}-i)m_{g_{H}}(i).\label{eq:holding}
\end{align}

\begin{rem}
Note that for $i=0,1,\ldots,Q_{H}$, $m_{g_{H}}(i)$ is the sum of
the probabilities to reach inventory $Q_{H}-i$ after $k=0,1,2,\ldots$
dispatches. In the integrated model under HP (see \cite{Axsa01,axsa15,ZF91} for details),
\[
\frac{m_{g_{H}}(i)}{M_{G_{H}}(Q_{T})}=\frac{m_{g_{H}}(i)}{\sum_{i=0}^{Q_{H}}m_{g_{H}}(i)}\]
is the steady state probability of inventory level $Q_{H}-i$, where
$i=0,1,\ldots,Q_{H}.$
\end{rem}

Now, letting $\mathbb{E}[HCost_{HP}]$ denote the expected inventory holding
cost within one replenishment cycle under HP, we have
\begin{align}
\mathbb{E}[HCost_{HP}] & =hH(Q_{H},q_{H},T_{H}) =\frac{h}{\lambda}\mathbb{E}[Y_{q_{H}}]\sum_{i=0}^{Q_{H}}(Q_{H}-i)m_{g_{H}}(i).\label{eq:inventory cost}
\end{align}
Also, substituting Eqs.~\eqref{eq:expected replenish cycle length} and \eqref{eq:holding} in Eq.~(\ref{air-1}), it then follows that the $AIR$ associated with the integrated model under HP is given by
\begin{eqnarray}
AIR_{HP}=\frac{\sum_{i=0}^{Q_{H}}(Q_{H}-i)m_{g_{H}}(i)}{M_{G_{H}}(Q_{H})}.\label{AIR_HP}
\end{eqnarray}

Based on a visual inspection of the right hand side of Eq.~\eqref{AIR_HP}, it is easy to conclude that $AIR_{HP}$ is difficult to evaluate both analytically and numerically. This difficulty is rooted at the
same issue with the form of Eq.~\eqref{eq:expected replenish cycle length} which is used for obtaining Eq.~\eqref{AIR_HP}. Hence, the approximations presented below are useful proxies when an exact comparative analysis is not possible as in the case of evaluating $AIR$ expressions (e.g., see Section \ref{results}).

From the definition of $K_{H}$ in Eq.~\eqref{KH},
\[
\mathbb{E}\left[\sum_{j=1}^{K_{H}}N_{j}\right]  =\mathbb{E}[K_{H}]\mathbb{E}[Y_{q_{H}}]\geq Q_{H}+1
\quad \mbox{and} \quad
\mathbb{E}\left[\sum_{j=1}^{K_{H}-1}N_{j}\right]  =\mathbb{E}[K_{H}]\mathbb{E}[Y_{q_{H}}]-\mathbb{E}[Y_{q_{H}}]\leq Q_{H}.
\]
As result,
\[\frac{Q_{H}}{\mathbb{E}[Y_{q_{H}}]}+1\geq\mathbb{E}[K_{H}]\geq\frac{Q_{H}}{\mathbb{E}[Y_{q_{H}}]}+\frac{1}{\mathbb{E}[Y_{q_{H}}]}.
\]
Then, treating $K_{H}$ as a continuous random variable, %(this approximation technique is also used in \cite{CL00}, where the integrated model under TP is studied),
we have
\begin{eqnarray*}
\mathbb{E}\left[\sum_{j=1}^{K_{H}}N_{j}\right]=\mathbb{E}\left[K_{H}\right]\mathbb{E}\left[Y_{q_{H}}\right]\approx Q_{H}+1,
\end{eqnarray*}
which, in turn, implies that
\begin{eqnarray}
\mathbb{E}[K_{H}]\approx\frac{Q_{H}+1}{\mathbb{E}[Y_{q_{H}}]}.\label{appox of EK}
\end{eqnarray}
It then follows from from Eq.~(\ref{eq:replenishment cycle}) that
\begin{eqnarray}
\mathbb{E}\left[L_{HP}^{R}\right]=\frac{1}{\lambda}\mathbb{E}[K_{H}]\mathbb{E}\left[Y_{q_{H}}\right]\approx\frac{Q_{H}+1}{\lambda}.\label{appr expected repl length}
\end{eqnarray}
Moreover, using Eq.~(\ref{eq:EK}), we also have
\begin{eqnarray*}
M_{G_H}(Q_{H})\approx\frac{Q_{H}+1}{\mathbb{E}[Y_{q_{H}}]},
\end{eqnarray*}
and
\begin{eqnarray*}
m_{g_H}(i)=M_{G_H}(i)-M_{G_H}(i-1)\approx\frac{1}{\mathbb{E}[Y_{q_{H}}]},\label{mg}
\end{eqnarray*}
so that Eq.~(\ref{eq:holding}) leads to
\begin{eqnarray}
\mathbb{E}[H_{HP}]\approx\frac{1}{\lambda}\mathbb{E}[Y_{q_{H}}]Q_{H}+\frac{1}{2\lambda}(Q_{H}+1)Q_{H}.\label{approxholding}
\end{eqnarray}
Substituting Eqs.~\eqref{appr expected repl length} and \eqref{approxholding} in Eq.~(\ref{air-1}), it then follows that
\begin{eqnarray}
AIR_{HP}\approx\frac{Q_{H}\left(2\mathbb{E}[Y_{q_{H}}]+Q_{H}+1\right)}{2(Q_{H}+1)}.\label{apprAIR_HP}
\end{eqnarray}

\subsubsection{Expected Dispatch Cost per Replenishment Cycle}

Let $\mathbb{E}[DCost_{HP}]$ denote the dispatch cost per replenishment
cycle. Since all outstanding demands are dispatched every $\tau_{q_{H}}\wedge T_{H}$
units of time and $K_{H}$ is the number of shipment consolidation
cycles within one replenishment cycle, we have
\begin{align}
\mathbb{E}\left[DCost_{HP}\right] & =A_{D}\mathbb{E}[K_{H}]+c_{D}\mathbb{E}\Big[\sum_{j=1}^{K_{H}}N_{j}\Big] =A_{D}\mathbb{E}[K_{H}]+c_{D}\mathbb{E}[K_{H}]\mathbb{E}[Y_{q_{H}}].\label{eq:dispatch cost}
\end{align}

\subsubsection{Expected Waiting Cost per Replenishment Cycle, $AOD$, and $AOSD$}\label{expected cumulative linear delay}

In the spirit of previous work in shipment consolidation, we associate a linear, time-dependent  penalty, denoted by $\omega$, associated with customer waiting. Since the shipment consolidation cycle length under a HP with parameters
$q_{H}$ and $T_{H}$ is $\tau_{q_{H}}\wedge T_{H}$ and the corresponding cumulative
linear delay within one shipment consolidation cycle is $W_{HP}=\int_{0}^{\tau_{q_{H}}\wedge T_{H}}N(t)dt$,
the expected cumulative waiting within one shipment
consolidation cycle is
\[
\mathbb{E}\left[W_{HP}\right]=\mathbb{E}\left[\int_{0}^{\tau_{q_{H}}\wedge T_{H}}N(t)dt\right].
\]

While the results in \cite{MCB10} already provide a useful, yet, involved method to compute the expected cumulative waiting within one consolidation cycle under HP in the context of pure consolidation practices, here we propose a drastically simplified approach based on a martingale argument associated with the Poisson demand process.

\begin{lem}
\label{lem:martin} Let $N(t)$ be a Poisson process with rate
$\lambda$, and define \textup{$W(t)\triangleq\int_{0}^{t}N(u)du$.}
Then
\[
W(t)-\frac{1}{2\lambda}N^{2}(t)+\frac{1}{2\lambda}N(t)
\]
is a martingale with respect to the natural filtration \textup{$\{\mathcal{G}_{t}\}$}, which is the $\sigma$-field generated by the family of random variables $\{N(s), s\in [0,t]\}$.
\end{lem}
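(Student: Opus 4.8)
The plan is to verify the three defining properties of a martingale for the process $M(t)\triangleq W(t)-\frac{1}{2\lambda}N^{2}(t)+\frac{1}{2\lambda}N(t)$: adaptedness, integrability, and the identity $\mathbb{E}[M(t)\mid\mathcal{G}_{s}]=M(s)$ for $0\le s\le t$. Adaptedness is immediate, since $N(t)$ and $W(t)=\int_{0}^{t}N(u)\,du$ are both deterministic functionals of the path $\{N(u):u\in[0,t]\}$ and hence $\mathcal{G}_{t}$-measurable. Integrability follows from the crude bound $0\le W(t)\le t\,N(t)$ together with the finiteness of the first and second moments of the Poisson variable $N(t)$, so that $\mathbb{E}|M(t)|<\infty$.

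For the martingale property I would work with increments over $[s,t]$ and exploit the independent-increments structure of $N$. Writing $D\triangleq N(t)-N(s)$, which is independent of $\mathcal{G}_{s}$ and distributed as $\text{Poisson}(\lambda(t-s))$, I would treat the three terms separately. For the quadratic term, expanding $N^{2}(t)=N^{2}(s)+2N(s)D+D^{2}$ and using $\mathbb{E}[D]=\lambda(t-s)$ and $\mathbb{E}[D^{2}]=\lambda(t-s)+\lambda^{2}(t-s)^{2}$ gives
\[
\mathbb{E}\bigl[N^{2}(t)-N^{2}(s)\mid\mathcal{G}_{s}\bigr]=2\lambda(t-s)N(s)+\lambda(t-s)+\lambda^{2}(t-s)^{2}.
\]
For the integral term I would pull the conditional expectation inside the time integral (Fubini, justified by the integrability above) and use $\mathbb{E}[N(u)\mid\mathcal{G}_{s}]=N(s)+\lambda(u-s)$ for $u\ge s$, which yields
\[
\mathbb{E}\bigl[W(t)-W(s)\mid\mathcal{G}_{s}\bigr]=(t-s)N(s)+\tfrac{\lambda}{2}(t-s)^{2}.
\]
The linear term contributes $\mathbb{E}[N(t)-N(s)\mid\mathcal{G}_{s}]=\lambda(t-s)$. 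Substituting these into $\mathbb{E}[M(t)-M(s)\mid\mathcal{G}_{s}]$ and collecting terms, one sees that the coefficient $1/(2\lambda)$ is exactly tuned so that the $N(s)(t-s)$ contributions, the $\frac{\lambda}{2}(t-s)^{2}$ contributions, and the leftover $\frac{1}{2}(t-s)$ contributions all cancel, leaving zero.

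The computation is essentially bookkeeping, so the only place requiring genuine care --- and what I regard as the main obstacle --- is the term $W(t)-W(s)$: one must justify exchanging $\mathbb{E}[\,\cdot\mid\mathcal{G}_{s}]$ with $\int_{s}^{t}$ and correctly condition the whole integrand path $N(u)$ for $s\le u\le t$, rather than treating $W$ as a function of the endpoints alone. An alternative route, consistent with the stochastic-calculus theme of the paper, is to apply the jump integration-by-parts / It\^o formula to get $N^{2}(t)=2\int_{0}^{t}N(u^{-})\,dN(u)+N(t)$, rewrite $dN(u)=d\bigl(N(u)-\lambda u\bigr)+\lambda\,du$, and recognize that
\[
W(t)-\frac{1}{2\lambda}N^{2}(t)+\frac{1}{2\lambda}N(t)=-\frac{1}{\lambda}\int_{0}^{t}N(u^{-})\,d\bigl(N(u)-\lambda u\bigr)
\]
is a stochastic integral of a locally bounded predictable integrand against the compensated-Poisson martingale; the finite moments then upgrade this local martingale to a true martingale. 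I would present the direct increment computation as the main argument and mention this second derivation as a cross-check.
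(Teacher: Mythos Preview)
Your proof is correct and follows essentially the same approach as the paper's: direct verification of $\mathbb{E}[M(t)\mid\mathcal{G}_s]=M(s)$ by exploiting the independent-increments structure to compute the conditional expectations of $W(t)$, $N^{2}(t)$, and $N(t)$ separately and checking that the contributions cancel. The paper's version is terser (it omits the adaptedness/integrability checks and handles $\mathbb{E}[\int_s^t N(u)\,du\mid\mathcal{G}_s]$ via stationary increments rather than Fubini) and does not mention your stochastic-calculus alternative, but the core computation is identical.
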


\begin{proof}
See Appendix \ref{sec:Proofs} for all proofs.
\end{proof}

Using Lemma \ref{lem:martin}, noting $\tau_{q_{H}}\wedge T_{H}$
is a bounded stopping time with respect to the demand process $N(t)$, and applying the martingale stopping theorem, we obtain
\begin{eqnarray}
 \mathbb{E}[W_{HP}]=\mathbb{E}[W(\tau_{q_{H}}\wedge T_{H})] =\frac{1}{2\lambda}\mathbb{E}[N^{2}(\tau_{q_{H}}\wedge T_{H})-N(\tau_{q_{H}}\wedge T_{H})] =\frac{1}{2\lambda}\mathbb{E}[Y_{q_{H}}(Y_{q_{H}}-1)].\label{linear delay}
\end{eqnarray}
Now, letting $\mathbb{E}[WCost_{HP}]$ denote the expected waiting cost per replenishment
cycle and recalling that $K_{H}$ is the number of shipment consolidation cycles
within one replenishment cycle, we have
\begin{align}
\mathbb{E}[WCost_{HP}] & =\omega\mathbb{E}[K_{H}]\mathbb{E}[W_{HP}]=\frac{\omega}{2\lambda}\mathbb{E}[K_{H}]\mathbb{E}[Y_{q_{H}}(Y_{q_{H}}-1)].\label{eq:waiting cost}
\end{align}
Also, substituting Eqs.~\eqref{eq:consolidation size} and \eqref{linear delay} in Eq.~\eqref{aod-1}, it then follows that the $AOD$ associated with the integrated model under HP is given by
\begin{eqnarray}
AOD_{HP}=\frac{\mathbb{E}[Y_{q_{H}}(Y_{q_{H}}-1)]/(2\lambda)}{\mathbb{E}[Y_{q_{H}}]}.\label{AOD_HP}
\end{eqnarray}

While the cost penalty considered in $\mathbb{E}[WCost_{HP}]$ is assumed to be linear to the delivery delay, in practice, due to customer impatience, this assumption may not be realistic. Hence, we also consider the case
where this penalty may be proportional to the square of the delivery delay encountered by the customer. The counterpart expected cumulative
squared waiting within one shipment consolidation cycle is then
\[
\mathbb{E}\left[W'_{HP}\right]=\mathbb{E}\left[\int_{0}^{\tau_{q_{H}}\wedge T_{H}}(\tau_{q_{H}}\wedge T_{H}-t)^{2}dN(t)\right],
\]
and as we show in Appendix \ref{sec:Squared-Delay},
\begin{eqnarray}
\mathbb{E}\left[W'_{HP}\right]=\frac{1}{3\lambda^{2}}\mathbb{E}[Y_{q_{H}+1}(Y_{q_{H}+1}-1)(Y_{q_{H}+1}-2)].\label{squareddelay}
\end{eqnarray}

Noting that HP with parameters $q$ and
$T$ reduces
\begin{itemize}
    \item  to QP with parameter $q$ when $T\rightarrow\infty$, and
    \item to TP with parameter $T$ when $q\rightarrow\infty$,
\end{itemize}
and using the index TP or QP to represent the consolidation policy type to generalize the notation we have adopted so far, we also have the expected cumulative
squared waiting within one consolidation cycle under QP with parameter $q$ is
\begin{eqnarray}
\mathbb{E}\left[W'_{QP}\right]=\lim_{T\rightarrow\infty}\mathbb{E}[W'_{HP}]=\frac{q^{3}-q}{3\lambda^{2}},
\label{squareddelay-QP}
\end{eqnarray}
and the expected cumulative squared waiting within one consolidation cycle under TP with parameter $T$ is
\begin{eqnarray}
\mathbb{E}\left[W'_{TP}\right]=\lim_{q\rightarrow\infty}\mathbb{E}\left[W'_{HP}\right]=\frac{\mathbb{E}[Y(Y-1)(Y-2)]}{3\lambda^{2}}=\frac{\lambda T^{3}}{3}.\label{squared delay-TP}
\end{eqnarray}

Clearly, one can then associate a cost penalty, say ${\omega}^{'}$, with ${E}[W'_{.}]$ leading to alternative expressions of the expected long run average cost functions of the three integrated models. Our primary purpose here in developing the above expressions of ${E}[W'_{.}]$ is to utilize them in examining the counterpart $AOSD$ expressions as indicators of service performance with impatient customers which have not been investigated in the previous literature. For this purpose, substituting Eqs.~\eqref{eq:consolidation size} and \eqref{squareddelay} in Eq.~(\ref{aosd-1}), we have the following $AOSD$ expression associated with the integrated model under HP:
\begin{eqnarray}
    AOSD_{HP}= \frac{\mathbb{E}\left[Y_{q_{H}+1}(Y_{q_{H}+1}-1)(Y_{q_{H}+1}-2)\right]/(3\lambda^{2})}{\mathbb{E}[Y_{q_{H}}]}. \label{AOSD_HP}
\end{eqnarray}

Likewise, using Eqs.~\eqref{squareddelay-QP} and \eqref{squared delay-TP} and noting that expected number of orders arriving in one consolidation cycle under QP with parameter $q$ and TP with parameter $T$ is $q$ and $\lambda T$, respectively, it can be easily verified that the $AOSD$ expressions associated with the integrated models under QP and TP are:
\begin{eqnarray}
    AOSD_{QP}=\frac{q^{2}-1}{3\lambda^{2}} \quad \mbox{and} \quad  AOSD_{TP}= T^{2}/3.\label{AOSD_QP and TP}
\end{eqnarray}
Since $AOSD$ has not been studied in \cite{CML06} and \cite{CL00}, the counterpart expressions are developed here.

\subsubsection{Exact Expression of the Cost Function under HP}

Last but not least, using Eqs.~(\ref{eq:replenishment cost}), (\ref{eq:inventory cost}),
(\ref{eq:dispatch cost}), and (\ref{eq:waiting cost}),
in $
\mathbb{E}[TCost_{HP}]=\mathbb{E}[RCost_{HP}]+\mathbb{E}[HCost_{HP}]+\mathbb{E}[DCost_{HP}]+\mathbb{E}[WCost_{HP}]$, and recalling Eqs.~(\ref{eq:average cost rate}) and (\ref{eq:expected replenish cycle length}), we have
\begin{eqnarray}
AC_{HP}(Q_{H},q_{H},T_{H})
& = &  \lambda(c_{R}+c_{D})+ \frac{\lambda A_{R}}{M_{G}(Q_{H})\mathbb{E}[Y_{q_{H}}]}+
\frac{\lambda A_{D}}{\mathbb{E}[Y_{q_{H}}]}
\nonumber \\
&& +\frac{h\sum_{i=0}^{Q_{H}}(Q_{H}-i)m_{g_{H}}(i)}{M_{G}(Q_{H})}
+\frac{\omega\mathbb{E}\left[Y_{q_{H}}(Y_{q_{H}}-1)\right]}{2\mathbb{E}[Y_{q_{H}}]}.\label{eq:objectiveofHP}
\end{eqnarray}

\section{\label{sec:Comparison-of-service}Comparative Analysis}

We now set the stage for a detailed comparative analysis of the operation of the distribution warehouse considering the integrated models under QP (as presented in \cite{CML06}), TP ( as presented in \cite{CL00}), and HP (as developed in the previous section). Since our comparative analysis draws from \cite{CML06} and \cite{CL00}, first in Section \ref{summary} we recall the relevant results therein directly without derivations and provide a summary of the key expressions we use for comparisons. Because the subsequent analysis relies on some new and refined properties of truncated Poisson random variables, next in Section \ref{trun-prop}, we discuss such relevant formal results. Finally, in Section \ref{results}, we compare $AOD$, $AOSD$ and $AIR$ as critical measures of interest of the three integrated models of investigation, and then we focus on a comparison of these models in terms of the expected long run average cost per time unit. Interestingly, we are able to offer a complete analytical comparison of the cost criteria without a need for explicitly solving the three underlying optimization problems.

\subsection{Summary of Expressions Utilized for Comparisons}
\label{summary}

In this subsection, we directly cite the relevant results from \cite{CML06} and \cite{CL00} for the integrated models under QP and TP, respectively. We adopt a self-explanatory and mnemonic notation using the policy identifier (QP, TP, HP) as an index.

We let $T$ and $Q_{T}$ denote the consolidation cycle length and the order-up-to level in the integrated model under TP for which $K_{T}$ is the number of consolidation cycles within one replenishment cycle.
%Also, we let $g_{T}(\cdot)$ and $G_{T}(\cdot)$ denote the probability mass function and the distribution function of $N(T)$ (a Poisson random variable with mean $\lambda T$), respectively, and we define $g_{T}^{(k)}(\cdot)$ and $G_{T}^{(k)}(\cdot)$ as the $k$-fold convolution of $g_{T}(\cdot)$ and $G_{T}(\cdot)$, respectively, that is, $g_{T}^{(k)}(i)=(k\lambda T)^{i}e^{-k\lambda T}/i!$ and $G_{T}^{(k)}(j)=\sum_{i=0}^{j}(k\lambda T)^{i}e^{-k\lambda T}/i!$. Moreover, we let $m_{g_{T}}(i)\triangleq\sum_{k=0}^{\infty}g_{T}^{(k)}(i)$ and $M_{G_{T}}(i)\triangleq\sum_{k=0}^{\infty}G_{T}^{(k)}(i)$.
From the results in
\cite{CL00}, $\mathbb{E}[K_{T}]=M_{G_{T}}(Q_{T})$, where
\[
M_{G_{T}}(i)\triangleq\sum_{k=0}^{\infty}G_{T}^{(k)}(i), \quad
g_{T}^{(k)}(i)=\frac{(k\lambda T)^{i}e^{-k\lambda T}}{i!}, \; \mbox{and} \; G_{T}^{(k)}(j)=\sum_{i=0}^{j}g_{T}^{(k)}(i).
\]
Also, the expected
cumulative inventory carrying per replenishment cycle under TP is
$\mathbb{E}[H_{TP}]=T\sum_{i=0}^{Q_{T}}(Q_{T}-i)m_{g_{T}}(i)$, where $m_{g_{T}}(i)\triangleq\sum_{k=0}^{\infty}g_{T}^{(k)}(i)$, and the counterpart expected cumulative waiting time per consolidation cycle is $\mathbb{E}[W_{TP}]=\lambda T^{2}/2$. Since $\mathbb{E}[K_{T}]$ cannot be computed in closed form, using a technique similar to the one used to obtain Eq. (\ref{apprAIR_HP}), the results in \cite{CL00} indicate that
\[\mathbb{E}[K_{T}] \approx \frac{Q_{T}+1}{\lambda T} \quad \mbox{and} \quad \mathbb{E}[H_{TP}]\approx TQ_{T}+\frac{Q_{T}(Q_{T}+1)}{2\lambda},\] whereas
\begin{eqnarray}
\mathbb{E}\left[L_{TP}^{C}\right]=T, \quad \mathbb{E}\left[L_{TP}^{R}\right]=M_{G_{T}}(Q_{T})T\approx\frac{Q_{T}+1}{\lambda},\label{Length_TP}
\end{eqnarray}
\begin{eqnarray}
\mathbb{E}[RCost_{TP}]=A_{R}+c_{R}\mathbb{E}[K_{T}]\lambda T, \quad \mbox{and} \quad \mathbb{E}[DCost_{TP}]=A_{D}\mathbb{E}[K_{T}]+c_{D}\mathbb{E}[K_{T}]\lambda T. \label{cost_TP}
\end{eqnarray}
Also, using the definitions of $AOD$ and $AIR$ given by Eqs.~(\ref{aod-1}) and (\ref{air-1}), it is easy to show that
\begin{eqnarray}
AOD_{TP}=T/2, \quad\mbox{and} \quad  AIR_{TP}=\frac{\sum_{i=0}^{Q_{T}}(Q_{T}-i)m_{g_{T}}(i)}{M_{G_{T}}(Q_{T})}\approx\frac{Q_{T}(2\lambda T+Q_{T}+1)}{2(Q_{T}+1)}.\label{AODAIR_TP}
\end{eqnarray}

In the integrated model under QP in \cite{CML06}, $n$ denotes the number of consolidation cycles within an inventory replenishment cycle and $q$ denotes the targeted consolidation size. The corresponding
expected cumulative inventory carrying per replenishment cycle  and the expected
cumulative waiting per consolidation cycle
are given by
\[
\mathbb{E}[H_{QP}]=\frac{1}{2\lambda}n(n-1)q^{2}
\quad \mbox{and} \quad
\mathbb{E}[W_{QP}]=\frac{1}{2\lambda}(q-1)q,
\]
respectively, whereas
\begin{equation}
\mathbb{E}\left[L_{QP}^{C}\right]=\frac{q}{\lambda},  \mathbb{E}\left[L_{QP}^{R}\right]=\frac{nq}{\lambda},
\mathbb{E}[RCost_{QP}]=A_{R}+c_{R}nq, \; \mbox{and} \; \mathbb{E}[DCost_{QP}]=nA_{D}+c_{D}nq. \label{Length-costQP}
\end{equation}
Also, using the definitions of $AOD$ and $AIR$ given by Eqs.~(\ref{aod-1}) and (\ref{air-1}), it is easy to show that
\begin{equation}
AOD_{QP}=\frac{q-1}{2\lambda}, \quad \mbox{and} \quad  AIR_{QP}=(n-1)q/2.\label{AODAIR_QP}
\end{equation}

Recalling Eqs.~\eqref{eq:consolidation cycle}, \eqref{eq:replenishment cost}, \eqref{appr expected repl length}, and \eqref{eq:dispatch cost} developed  in Eqs. \eqref{Length_TP}, \eqref{cost_TP}, and \eqref{Length-costQP} that were previously developed in \cite{CML06} and \cite{CL00}, we summarize the exact and approximate expressions of expected cycle lengths, inbound replenishment and outbound dispatch costs per replenishment
cycle in Table \ref{t1}.

\begin{table}[ht]
\centering{} %
\begin{tabular}{ll}
\hline
$\mathbb{E}\left[L_{QP}^{R}\right]=nq/\lambda$  &   $\mathbb{E}\left[L_{QP}^{C}\right]=q/\lambda$ \tabularnewline\\
$\mathbb{E}\left[L_{TP}^{R}\right]=M_{G_{T}}(Q_{T})T\approx(Q_{T}+1)/\lambda$  &   $\mathbb{E}\left[L_{TP}^{C}\right]=T$ \tabularnewline\\
$\mathbb{E}\left[L_{HP}^{R}\right]=M_{G_{H}}(Q_{H})\mathbb{E}[Y_{q_{H}}]/\lambda \approx (Q_{H}+1)/\lambda$  &   $\mathbb{E}\left[L_{HP}^{C}\right]=\mathbb{E}[Y_{q_{H}}]/\lambda$ \tabularnewline\\
$\mathbb{E}[RCost_{QP}]=A_{R}+c_{R}nq$ &
$\mathbb{E}[DCost_{QP}]=nA_{D}+c_{D}nq$ \tabularnewline\\
$\mathbb{E}[RCost_{TP}]=A_{R}+c_{R}\mathbb{E}[K_{T}]\lambda T$ &
$\mathbb{E}[DCost_{TP}]=A_{D}\mathbb{E}[K_{T}]+c_{D}\mathbb{E}[K_{T}]\lambda T$ \tabularnewline\\
$\mathbb{E}[RCost_{HP}]=A_{R}+c_{R}\mathbb{E}[K_{H}]\mathbb{E}[Y_{q_{H}}]$ &
$\mathbb{E}[DCost_{HP}]=A_{D}\mathbb{E}[K_{H}]+c_{D}\mathbb{E}[K_{H}]\mathbb{E}[Y_{q_{H}}]$ \tabularnewline
\hline
\end{tabular}\caption{Expected cycle lengths and replenishment and dispatch costs per replenishment
cycle.}
\label{t1}
\end{table}

Moreover, recalling Eqs.~\eqref{AIR_HP}, \eqref{apprAIR_HP}, \eqref{AOD_HP}, \eqref{AOSD_HP}, \eqref{AOSD_QP and TP}, \eqref{AODAIR_TP}, and \eqref{AODAIR_QP} developed here, we summarize the exact and approximate expressions of $AOD$, $AOSD$, and $AIR$ in Table \ref{t2}. We use both Tables \ref{t1} and \ref{t2} for the purposes of our comparative analysis in the remainder of the paper.
\begin{table}[ht]
\centering{}\hspace*{-0.5in} %
\begin{tabular}{llll}
\hline
 %&  &  & \tabularnewline
$AOD_{QP}$  & $=$ & {\Large{}$\frac{q-1}{2\lambda}$}  & \tabularnewline
\\
$AOD_{TP}$  & $=$  & $T/2$  & \tabularnewline
\\
$AOD_{HP}$  & $=$  & {\Large{}$\frac{\mathbb{E}\left[Y_{q_{H}}(Y_{q_{H}}-1)\right]/(2\lambda)}{\mathbb{E}\left[Y_{q_{H}}\right]}$}  & \tabularnewline
\\
$AOSD_{QP}$  & $=$  & {\Large{}$\frac{q^{2}-1}{3\lambda^{2}}$}  & \tabularnewline
\\
$AOSD_{TP}$  & $=$  & $T^{2}/3$  & \tabularnewline
\\
$AOSD_{HP}$  & $=$  & {\Large{}$\frac{\mathbb{E}\left[Y_{q_{H}+1}(Y_{q_{H}+1}-1)(Y_{q_{H}+1}-2)\right]/(3\lambda^{2})}{\mathbb{E}\left[Y_{q_{H}}\right]}$} \tabularnewline
\\
$AIR_{QP}$  & $=$  & $(n-1)q/2$\tabularnewline
\\
$AIR_{TP}$  & $=$  &
{\Large{}{}{}
$\frac{\sum_{i=0}^{Q_{T}}(Q_{T}-i)m_{g_{T}}(i)}{M_{G_{T}}(Q_{T})}\approx\frac{Q_{T}(2\lambda T+Q_{T}+1)}{2(Q_{T}+1)}$}\tabularnewline
\\
$AIR_{HP}$  & $=$  &
{\Large{}
$\frac{\sum_{i=0}^{Q_{H}}(Q_{H}-i)m_{g_{H}}(i)}{M_{G_{H}}(Q_{H})}\approx\frac{Q_{H}(2\mathbb{E}\left[Y_{q_{H}}\right]+Q_{H}+1)}{2(Q_{H}+1)}$}\tabularnewline
\hline
\end{tabular}\caption{Expressions of $AOD$, $AOSD$, and $AIR$.}
\label{t2}
\end{table}

\subsection{New and Refined Properties of Truncated Poisson Random Variables}
\label{trun-prop}

The following lemma offers the key technical result which is useful to prove the new and refined properties of truncated Poisson random variables revealed by Lemmas \ref{lem:useful1} and \ref{lem:useful2}.
\begin{lem}
\label{lem:key}Suppose $X\sim\text{Poisson}(\mu)$, $V_{n}\sim\text{gamma}(n,1)$
for integer $n\geq1$, we have
\[
\mathbb{E}\left[X_{q}^{(k)}\right]\triangleq\mathbb{E}\left[X_{q}(X_{q}-1)\cdots(X_{q}-k+1)\right]=\mathbb{E}\left[V_{q-k+1}^{k}\wedge\mu^{k}\right],
\]
\[
\frac{d}{d\mu}\mathbb{E}\left[X_{q}^{(k)}\right]=k\mu^{k-1}\mathbb{P}(X\leq q-k)
\]
where $q$ and $k$ are two positive integers and $k\leq q$.
\end{lem}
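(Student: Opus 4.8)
The plan is to reduce both assertions to elementary Poisson partial sums via two standard facts, and then match. Writing $p_j=\mathbb{P}(X=j)=e^{-\mu}\mu^{j}/j!$ and, for an integer $m$, the falling factorial $m^{(k)}=m(m-1)\cdots(m-k+1)$, the first tool is the \emph{factorial shift identity}: for every $j\ge k$,
\[
j^{(k)}p_j=\frac{j!}{(j-k)!}\,\frac{e^{-\mu}\mu^{j}}{j!}=\mu^{k}\,\frac{e^{-\mu}\mu^{j-k}}{(j-k)!}=\mu^{k}p_{j-k},
\]
which is immediate from the Poisson mass function. The second tool is the \emph{Erlang--Poisson duality}: for integer $m$, $\mathbb{P}(V_m>\mu)=\mathbb{P}(X\le m-1)$, equivalently $\mathbb{P}(V_m\le\mu)=\mathbb{P}(X\ge m)$, since $V_m$ is the epoch of the $m$-th event in a rate-$1$ Poisson process. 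I will also use that $x\mapsto x^{k}$ is increasing on $[0,\infty)$, so that $V_{q-k+1}^{k}\wedge\mu^{k}=(V_{q-k+1}\wedge\mu)^{k}$, which is the form I will actually integrate.

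For the left side of the first identity I would split on the truncation. Since $X_q=X$ on $\{X\le q-1\}$, $X_q=q$ on $\{X\ge q\}$, and $j^{(k)}=0$ for $j<k$, the computation produces
\[
\mathbb{E}\!\left[X_q^{(k)}\right]=\sum_{j=k}^{q-1}j^{(k)}p_j+q^{(k)}\mathbb{P}(X\ge q)=\mu^{k}\mathbb{P}(X\le q-k-1)+q^{(k)}\mathbb{P}(X\ge q),
\]
where the last equality applies the shift identity term by term and reindexes. For the right side I set $n=q-k+1$ and split $\mathbb{E}[(V_n\wedge\mu)^{k}]$ at the threshold:
\[
\mathbb{E}\!\left[(V_n\wedge\mu)^{k}\right]=\int_{0}^{\mu}v^{k}\,\frac{v^{n-1}e^{-v}}{(n-1)!}\,dv+\mu^{k}\,\mathbb{P}(V_n>\mu).
\]
Recognizing the integrand as $\tfrac{(n+k-1)!}{(n-1)!}$ times a $\mathrm{gamma}(n+k,1)$ density, and using $n+k-1=q$, $n+k=q+1$, $n-1=q-k$, the integral collapses to $q^{(k)}\mathbb{P}(V_{q+1}\le\mu)=q^{(k)}\mathbb{P}(X\ge q+1)$, while the boundary term is $\mu^{k}\mathbb{P}(X\le q-k)$ by the duality. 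Subtracting the two closed forms leaves exactly $\mu^{k}p_{q-k}-q^{(k)}p_{q}$, which vanishes by the shift identity at $j=q$; this establishes the identity.

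For the derivative the cleanest route is to differentiate the gamma form, since there the only dependence on $\mu$ sits in the threshold. Writing $\mathbb{E}[(V_n\wedge\mu)^{k}]=\int_{0}^{\mu}v^{k}f_{V_n}(v)\,dv+\mu^{k}\int_{\mu}^{\infty}f_{V_n}(v)\,dv$ and applying the fundamental theorem of calculus, the two boundary contributions $\mu^{k}f_{V_n}(\mu)$ cancel, leaving $k\mu^{k-1}\mathbb{P}(V_n>\mu)=k\mu^{k-1}\mathbb{P}(X\le q-k)$ by the duality, as claimed. (Equivalently, one may differentiate the Poisson closed form using $\tfrac{d}{d\mu}\mathbb{P}(X\le m)=-p_m$, which telescopes, and cancel via the shift identity applied at $j=q$ and $j=q-1$.) I expect the only genuine hazard to be index bookkeeping, namely keeping the four shifted arguments $q$, $q-k$, $q-k+1$, and $q+1$ aligned so that the boundary cancellation comes out exactly, rather than any analytic difficulty; verifying the edge case $k=q$ (where $\mathbb{P}(X\le-1)=0$ and the sum is empty) is a useful consistency check.
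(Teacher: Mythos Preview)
Your proof is correct and follows essentially the same approach as the paper: split the expectation at the truncation, apply the factorial shift identity $j^{(k)}p_j=\mu^{k}p_{j-k}$ to collapse the finite sum, and translate between Poisson tail probabilities and gamma integrals via the Erlang--Poisson duality. Two small differences are worth noting. First, you place the $j=q$ term with the tail (splitting at $\{X\le q-1\}$ versus $\{X\ge q\}$), which forces the extra cancellation $\mu^{k}p_{q-k}-q^{(k)}p_q=0$; the paper instead splits at $\{X\le q\}$ versus $\{X\ge q+1\}$ so the two closed forms match directly without that step. Second, for the derivative the paper differentiates the Poisson closed form $\mu^{k}\mathbb{P}(X\le q-k)+q^{(k)}\mathbb{P}(X\ge q+1)$ using $\frac{d}{d\mu}\mathbb{P}(X\le m)=-p_m$ and cancels via the shift identity, whereas your primary route differentiates the gamma representation; your version is arguably cleaner since the boundary contributions $\mu^{k}f_{V_n}(\mu)$ cancel automatically and the duality gives the answer in one line.
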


The property of truncated Poisson random variables revealed by the following lemma is fundamental for the purpose of comparing HP and TP in terms of $AOD$ under a fixed expected consolidation cycle length.

\begin{lem}
\label{lem:useful1} Suppose $X\sim\text{Poisson}(\mu)$, then $\mathbb{E}^{2}\left[X_{q}\right]/\mathbb{E}\left[X_{q}^{(2)}\right]$
is increasing in $\mu$, and $\mathbb{E}^{2}\left[X_{q}\right]>\mathbb{E}\left[X_{q}^{(2)}\right]$ for all $\mu>0$,
where $\mathbb{E}\left[X_{q}^{(2)}\right]\triangleq\mathbb{E}\left[X_{q}(X_{q}-1)\right]$.
\end{lem}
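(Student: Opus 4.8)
The plan is to derive both assertions from the single derivative formula in Lemma~\ref{lem:key}. Throughout write $f(\mu)=\mathbb{E}[X_q]$, $g(\mu)=\mathbb{E}[X_q^{(2)}]=\mathbb{E}[X_q(X_q-1)]$, and $p_j=e^{-\mu}\mu^j/j!$; the ratio is meaningful only for $q\ge2$ (so that $g>0$), which is the case I treat. Since $f,g>0$ for $\mu>0$, the ratio $R(\mu)=f(\mu)^2/g(\mu)$ satisfies $R'=f\,(2f'g-fg')/g^2$, so $R$ is increasing exactly when $2f'g-fg'>0$.

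First I would record, by taking $k=1$ and $k=2$ in Lemma~\ref{lem:key}, that $f'(\mu)=\mathbb{P}(X\le q-1)$ and $g'(\mu)=2\mu\,\mathbb{P}(X\le q-2)$. Next I would put $f$ and $g$ into closed form,
\[
f=\mu\,\mathbb{P}(X\le q-2)+q\,\mathbb{P}(X\ge q),\qquad g=\mu^2\,\mathbb{P}(X\le q-2)+q(q-1)\,\mathbb{P}(X\ge q+1),
\]
using the Poisson identities $j\,p_j=\mu\,p_{j-1}$ and $j(j-1)p_j=\mu^2 p_{j-2}$ to collapse the partial moments. Substituting these four quantities into $2f'g-fg'$ and invoking the boundary relation $\mu\,p_{q-1}=q\,p_q$ to cancel the lone cross term, I expect the numerator to factor as
\[
2f'g-fg'=2q\,\mathbb{P}(X\ge q+1)\big[(q-1)\mathbb{P}(X\le q-1)-\mu\,\mathbb{P}(X\le q-2)\big].
\]
Since $q\ge2$ and $\mathbb{P}(X\ge q+1)>0$, monotonicity of $R$ then reduces to the scalar inequality $(q-1)\mathbb{P}(X\le q-1)>\mu\,\mathbb{P}(X\le q-2)$.

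That inequality I would settle by rewriting $\mu\,\mathbb{P}(X\le q-2)=\sum_{j=0}^{q-2}\mu p_j=\sum_{j=1}^{q-1}j\,p_j$ (again from $\mu p_j=(j+1)p_{j+1}$), so that
\[
(q-1)\mathbb{P}(X\le q-1)-\mu\,\mathbb{P}(X\le q-2)=\sum_{j=0}^{q-1}(q-1-j)\,p_j>0,
\]
the strict positivity coming from the $j=0$ term $(q-1)p_0>0$ while every other coefficient is nonnegative; this establishes that $R$ is strictly increasing. For the second assertion I would run the same machinery on $D(\mu)\triangleq f^2-g=\mathbb{E}^2[X_q]-\mathbb{E}[X_q^{(2)}]$: differentiating and using the closed form of $f$ gives $D'=2\,\mathbb{P}(X\ge q)\big[q\,\mathbb{P}(X\le q-1)-\mu\,\mathbb{P}(X\le q-2)\big]$, whose bracket equals $\sum_{j=0}^{q-1}(q-j)p_j>0$. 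Since $f$ and $g$ both tend to $0$ as $\mu\downarrow0$ (as $X_q\to0$), so does $D$, and the strict monotonicity just shown then forces $D(\mu)>0$, i.e.\ $\mathbb{E}^2[X_q]>\mathbb{E}[X_q^{(2)}]$, for every $\mu>0$.

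The step I expect to be the main obstacle is the factorization: before substitution the numerator $2f'g-fg'$ is an unstructured mix of tail and partial-sum probabilities, and it is only the Poisson boundary relation $\mu\,p_{q-1}=q\,p_q$ that cancels the stray cross term and exposes the clean factor $(q-1)\mathbb{P}(X\le q-1)-\mu\,\mathbb{P}(X\le q-2)$. Arranging the closed forms for $f$ and $g$ so that this cancellation becomes visible is where the care lies; once the bracket is isolated, both claims collapse to the elementary positive-sum estimate.
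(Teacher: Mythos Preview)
Your proof is correct, and the monotonicity argument is essentially identical to the paper's: both differentiate the ratio via Lemma~\ref{lem:key}, substitute the closed forms for $\mathbb{E}[X_q]$ and $\mathbb{E}[X_q^{(2)}]$, and reduce positivity of the derivative to the factorized bracket $(q-1)\mathbb{P}(X\le q-1)-\mu\,\mathbb{P}(X\le q-2)=\sum_{j=0}^{q-1}(q-1-j)p_j>0$.

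The one genuine difference is in the second assertion. The paper deduces $\mathbb{E}^2[X_q]>\mathbb{E}[X_q^{(2)}]$ from the monotonicity of the ratio just established together with the boundary limit $\lim_{\mu\downarrow0}\mathbb{E}^2[X_q]/\mathbb{E}[X_q^{(2)}]=1$, which it evaluates via the gamma representation $\mathbb{E}[X_q^{(k)}]=\mathbb{E}[V_{q-k+1}^k\wedge\mu^k]$ from Lemma~\ref{lem:key}. You instead differentiate the difference $D=f^2-g$ directly, factor $D'=2\,\mathbb{P}(X\ge q)\sum_{j=0}^{q-1}(q-j)p_j>0$, and invoke $D(0^+)=0$. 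Your route is a bit more self-contained since it avoids the gamma-limit computation; the paper's route reuses the monotonicity already in hand and is therefore marginally shorter. Both are valid.
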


Before we present another new property of truncated Poisson random variables in Lemma \ref{lem:useful2}, we state an intuitive prerequisite result supporting its proof.
\begin{lem}
\label{lem:var} Suppose $X\sim\text{Poisson}(\mu)$,
and $A\subset\mathbb{Z}_{+}$, then
\[
\frac{d}{d\mu}\mathbb{E}\left[X|X\in A\right]=\frac{1}{\mu}\mathbb{VAR}\left[X|X\in A\right]\geq0,
\]
where the equality holds only if $A$ is a singleton set.
\end{lem}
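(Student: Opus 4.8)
The plan is to differentiate the conditional expectation directly, exploiting a self-similar structure of the Poisson pmf under differentiation in $\mu$. Writing $p_\mu(k)=e^{-\mu}\mu^{k}/k!$, I would set $D(\mu)=\sum_{k\in A}p_\mu(k)=\mathbb{P}(X\in A)$ and $N(\mu)=\sum_{k\in A}k\,p_\mu(k)$, so that $\mathbb{E}[X\mid X\in A]=N(\mu)/D(\mu)$. The single identity doing all the work is
\[
\frac{\partial}{\partial\mu}p_\mu(k)=\frac{k-\mu}{\mu}\,p_\mu(k),
\]
which is immediate from applying the product rule to $e^{-\mu}\mu^{k}/k!$. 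This lets me convert $\mu$-derivatives of the numerator and denominator into raised moments over $A$.

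First I would differentiate $N$ and $D$ term by term using the identity above, obtaining $\mu D'(\mu)=N(\mu)-\mu D(\mu)$ and $\mu N'(\mu)=\sum_{k\in A}k^{2}p_\mu(k)-\mu N(\mu)$. Applying the quotient rule to $N/D$ and collecting terms, the mixed products $\mu ND$ cancel, leaving
\[
\frac{d}{d\mu}\frac{N}{D}=\frac{1}{\mu}\left(\frac{\sum_{k\in A}k^{2}p_\mu(k)}{D}-\frac{N^{2}}{D^{2}}\right)=\frac{1}{\mu}\Big(\mathbb{E}[X^{2}\mid X\in A]-\mathbb{E}[X\mid X\in A]^{2}\Big),
\]
which is precisely $\tfrac{1}{\mu}\mathbb{VAR}[X\mid X\in A]$, the asserted formula. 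The inequality $\geq 0$ then follows instantly from nonnegativity of variance together with $\mu>0$.

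For the equality condition I would argue that $\mathbb{VAR}[X\mid X\in A]=0$ holds exactly when the conditional law of $X$ given $X\in A$ is degenerate; since $X$ has full support on the nonnegative integers, that conditional law charges every point of $A$, so it is degenerate if and only if $A$ is a singleton. The one step requiring genuine care — and the place I expect to be the main obstacle — is justifying the term-by-term differentiation when $A$ is infinite. I would handle this by noting that $N$, $D$, and $\sum_{k\in A}k^{2}p_\mu(k)$ are all subseries of the entire power series $\sum_{k}\mu^{k}/k!$ (multiplied by $e^{-\mu}$ and polynomial factors in $k$), whose formally differentiated series converge locally uniformly on compact subsets of $(0,\infty)$; standard power-series theory then legitimizes interchanging $d/d\mu$ with the summation, closing the argument.
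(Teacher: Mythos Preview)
Your proposal is correct and follows essentially the same route as the paper: write $\mathbb{E}[X\mid X\in A]$ as a ratio of sums, differentiate via the quotient rule, and recognize the result as $\tfrac{1}{\mu}\mathbb{VAR}[X\mid X\in A]$. The only cosmetic difference is that the paper cancels the common factor $e^{-\mu}$ from numerator and denominator before differentiating (so it works with $\sum_{x\in A}\mu^{x}/x!$ directly), whereas you keep $p_\mu(k)$ intact and use the identity $\partial_\mu p_\mu(k)=\tfrac{k-\mu}{\mu}p_\mu(k)$; the algebra collapses identically either way. Your write-up is in fact slightly more thorough than the paper's, since you explicitly justify the interchange of differentiation and summation for infinite $A$ and you spell out the equality case via full support of the Poisson law, both of which the paper leaves implicit.
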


Last but not least, the following additional property of truncated Poisson random variables is useful when we compare HP and TP in terms of $AOSD$ for a fixed expected consolidation cycle length.

\begin{lem}
\label{lem:useful2} Suppose $X\sim\text{Poisson}(\mu)$, then there
exists some $\mu'>0$, such that $\mathbb{E}^{3}\left[X_{q}\right]/\mathbb{E}\left[X_{q+1}^{(3)}\right]$
is increasing on $(0,\mu')$ and decreasing on $(\mu',\infty)$, and
$\mathbb{E}^{3}\left[X_{q}\right]>\mathbb{E}\left[X_{q+1}^{(3)}\right]$ for all $\mu>0$,
where $\mathbb{E}\left[X_{q+1}^{(3)}\right]\triangleq\mathbb{E}\left[X_{q+1}(X_{q+1}-1)(X_{q+1}-2)\right]$.
\end{lem}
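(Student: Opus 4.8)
The plan is to treat $R(\mu)\triangleq\mathbb{E}^{3}[X_{q}]/\mathbb{E}[X_{q+1}^{(3)}]$ as a smooth function of $\mu$ on $(0,\infty)$ and to read off its shape from the sign of $R'$. Writing $f(\mu)=\mathbb{E}[X_{q}]$ and $g(\mu)=\mathbb{E}[X_{q+1}^{(3)}]$, Lemma \ref{lem:key} supplies the derivative identities $f'(\mu)=\mathbb{P}(X\le q-1)$ and $g'(\mu)=3\mu^{2}\mathbb{P}(X\le q-2)$, so that, setting $u=f/f'$ and $v=3g/g'$,
\[
\operatorname{sign}R'(\mu)=\operatorname{sign}\bigl(3f'g-fg'\bigr)=\operatorname{sign}\Bigl(\frac{\mathbb{E}[X_{q+1}^{(3)}]}{\mu^{2}\mathbb{P}(X\le q-2)}-\frac{\mathbb{E}[X_{q}]}{\mathbb{P}(X\le q-1)}\Bigr)=\operatorname{sign}\bigl(v(\mu)-u(\mu)\bigr).
\]
Thus the entire statement reduces to understanding the single difference $v-u$. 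First I would record the two boundary values. A short expansion gives $f(\mu)\sim\mu$ and $g(\mu)\sim\mu^{3}$ as $\mu\downarrow0$, so $R(\mu)\to1$, while $f(\mu)\to q$ and $g(\mu)\to(q+1)q(q-1)$ as $\mu\to\infty$, so $R(\mu)\to q^{2}/(q^{2}-1)>1$; a matching next-order term in the first expansion shows that $R$ is increasing for small $\mu$, i.e.\ $v>u$ near $0$, whereas $v<u$ near $\infty$.

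Granting these endpoints, the inequality $\mathbb{E}^{3}[X_{q}]>\mathbb{E}[X_{q+1}^{(3)}]$ comes for free from the shape: if $R$ is increasing on $(0,\mu')$ and decreasing on $(\mu',\infty)$, then its infimum over $(0,\infty)$ is the smaller of the two limiting values, namely $1$, and that value is only approached as $\mu\downarrow0$ and is never attained. Hence $R(\mu)>1$ for every $\mu>0$. Consequently the whole lemma rests on the single \emph{unimodality} claim, that $v-u$ changes sign exactly once, from positive to negative.

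The hard part is precisely this single-crossing property. Unlike Lemma \ref{lem:useful1}, where the analogous ratio is \emph{globally} monotone and one only has to sign one logarithmic derivative, here $(\log R)'=3f'/f-g'/g$ is genuinely non-monotone: $f$ is concave in $\mu$ (it equals $\mathbb{E}[V_{q}\wedge\mu]$ by Lemma \ref{lem:key}), so $f'/f$ decreases, but $g'/g\sim3/\mu$ is also decreasing for small $\mu$, so one cannot conclude that $\log R$ is concave and a naive second-derivative argument fails. My plan is instead to establish a variation-diminishing property for $v-u$ directly: I would show that the quotient $v/u$ is \emph{strictly decreasing} in $\mu$, so that it meets the level $1$ at most once, and then combine this with $v>u$ near $0$ and $v<u$ near $\infty$ from the boundary analysis to pin the crossing to a unique $\mu'$.

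This is exactly where Lemma \ref{lem:var} should enter. After rewriting the factors $f/f'$ and $3g/g'$ in terms of conditional means $\mathbb{E}[X\mid X\in A]$ of the Poisson law, the conditional-variance identity $\frac{d}{d\mu}\mathbb{E}[X\mid X\in A]=\frac{1}{\mu}\mathbb{VAR}[X\mid X\in A]\ge0$ provides the monotonicity needed to sign $\frac{d}{d\mu}\log(u/v)$. The delicate point I expect to absorb most of the effort is verifying that the conditioning sets attached to $u$ and to $v$ interlace in the right way, so that the two conditional-variance contributions combine with the correct sign over the \emph{entire} range of $\mu$ rather than only near the endpoints; the gamma representations $\mathbb{E}[X_{q}]=\mathbb{E}[V_{q}\wedge\mu]$ and $\mathbb{E}[X_{q+1}^{(3)}]=\mathbb{E}[(V_{q-1}\wedge\mu)^{3}]$ furnished by Lemma \ref{lem:key} should make these conditional laws explicit and keep the bookkeeping manageable.
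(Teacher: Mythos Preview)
Your overall architecture matches the paper's: differentiate $R(\mu)=\mathbb{E}^{3}[X_{q}]/\mathbb{E}[X_{q+1}^{(3)}]$ using Lemma~\ref{lem:key}, reduce unimodality to a single sign change of $v-u$, read off the limits $R(0^{+})=1$ and $R(\infty)=q^{2}/(q^{2}-1)>1$, and deduce $R>1$ from the shape. Where you diverge is in the mechanism for the single crossing. You propose to show that $v/u$ is strictly decreasing by casting $u=f/f'$ and $v=3g/g'$ themselves as conditional means and invoking Lemma~\ref{lem:var}. This is the shaky step: $u=\mathbb{E}[X_{q}]/\mathbb{P}(X\le q-1)$ is \emph{not} a conditional expectation of $X$ on any set---it equals $\mu+q\,\mathbb{P}(X\ge q+1)/\mathbb{P}(X\le q-1)$---and likewise for $v$. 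There is no evident way to sign $(\log(v/u))'$ from Lemma~\ref{lem:var} alone, and in particular the upper-tail conditioning that turns out to be decisive never appears in your plan.

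The paper's key move is an algebraic substitution you have not anticipated: insert the identities
\[
\mathbb{E}[X_{q}]=q\,\mathbb{P}(X\ge q+1)+\mu\,\mathbb{P}(X\le q-1),\qquad
\mathbb{E}[X_{q+1}^{(3)}]=(q+1)q(q-1)\,\mathbb{P}(X\ge q+2)+\mu^{3}\,\mathbb{P}(X\le q-2)
\]
back into $3f'g-fg'$. The $\mu^{3}\mathbb{P}(X\le q-2)\mathbb{P}(X\le q-1)$ contributions cancel, and the sign of $R'$ becomes a positive factor times
\[
\frac{(q^{2}-1)\,\mathbb{P}(X\le q-1)}{\mu\,\mathbb{P}(X\le q-2)}-\frac{\mu\,\mathbb{P}(X\ge q+1)}{\mathbb{P}(X\ge q+2)}
=\frac{q^{2}-1}{\mathbb{E}[X\mid X\le q-1]}-\mathbb{E}[X\mid X\ge q+2].
\]
Now Lemma~\ref{lem:var} applies to \emph{both} terms---the first is $(q^{2}-1)$ over an increasing conditional mean, the second is itself an increasing conditional mean---so the whole expression is strictly decreasing from $+\infty$ to $-\infty$, giving the unique $\mu'$. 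The conditional-mean structure surfaces only \emph{after} this substitution and involves the upper tail $\{X\ge q+2\}$ as well as the lower tail; trying to coerce $u$ and $v$ into conditional means directly, as you outline, does not reach it.
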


\subsection{Comparative Results}
\label{results}

We now proceed with a comparison of the three integrated models in terms of $AOD$, $AOSD$ and $AIR$ as well as the cost performance.

\subsubsection{Order Delay Penalty Comparison}

We note that, for the purpose of a sensible comparison in terms of service frequency, the comparative results for $AOD$ and $AOSD$ consider a fixed expected consolidation cycle length, and, hence a fixed dispatch/service frequency, among the three shipment consolidation policies.

We begin with our first main result which demonstrates that under a fixed expected consolidation
cycle length, QP performs the best, TP performs the worst in terms
of $AOD$, and HP lies between QP and TP.

\begin{thm}
\label{thm:AOD}Under a fixed expected consolidation cycle length, $AOD_{QP}<AOD_{HP}<AOD_{TP}$. As a result, under a fixed expected consolidation cycle length and a fixed expected replenishment cycle length, we have
$\mathbb{E}[WCost_{QP}]<\mathbb{E}[WCost_{HP}]<\mathbb{E}[WCost_{TP}]$.
\end{thm}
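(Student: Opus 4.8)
The plan is to reduce the chain $AOD_{QP}<AOD_{HP}<AOD_{TP}$ to two moment inequalities for the truncated Poisson variable $Y_{q_{H}}=\min(Y,q_{H})$ with $Y\sim\text{Poisson}(\lambda T_{H})$, and then to invoke Lemma~\ref{lem:useful1} for the substantive half. First I would encode the hypothesis ``fixed expected consolidation cycle length.'' By Eqs.~\eqref{eq:consolidation cycle}, \eqref{Length_TP}, and \eqref{Length-costQP}, equating $\mathbb{E}[L_{QP}^{C}]=\mathbb{E}[L_{TP}^{C}]=\mathbb{E}[L_{HP}^{C}]$ is the same as $q=\lambda T=\mathbb{E}[Y_{q_{H}}]$; write $m$ for this common value (the common expected number of orders per consolidation cycle). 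Reading off Table~\ref{t2} then gives $AOD_{QP}=(m-1)/(2\lambda)$, $AOD_{TP}=m/(2\lambda)$, and $AOD_{HP}=\mathbb{E}[Y_{q_{H}}(Y_{q_{H}}-1)]/(2\lambda m)$, so the only quantity not pinned down by $m$ alone is the second factorial moment of $Y_{q_{H}}$.

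Clearing positive denominators, the two target inequalities become $AOD_{HP}<AOD_{TP}\iff\mathbb{E}[Y_{q_{H}}(Y_{q_{H}}-1)]<\mathbb{E}^{2}[Y_{q_{H}}]$ and $AOD_{QP}<AOD_{HP}\iff m(m-1)<\mathbb{E}[Y_{q_{H}}(Y_{q_{H}}-1)]$. The first is precisely the strict inequality $\mathbb{E}^{2}[X_{q}]>\mathbb{E}[X_{q}^{(2)}]$ of Lemma~\ref{lem:useful1} applied with $X=Y$, $\mu=\lambda T_{H}$, $q=q_{H}$; this is the one step I expect to lean entirely on the lemma, and the monotonicity half of Lemma~\ref{lem:useful1} is not needed. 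The second inequality is elementary: since $m=\mathbb{E}[Y_{q_{H}}]$ and $\mathbb{E}[Y_{q_{H}}(Y_{q_{H}}-1)]=\mathbb{E}[Y_{q_{H}}^{2}]-m$, it rearranges to $\mathbb{E}[Y_{q_{H}}^{2}]>\mathbb{E}^{2}[Y_{q_{H}}]$, i.e. $\mathbb{VAR}[Y_{q_{H}}]>0$, which holds because $q_{H}\geq1$ and $\lambda T_{H}>0$ make $Y_{q_{H}}$ nondegenerate ($\mathbb{P}(Y_{q_{H}}=0)=e^{-\lambda T_{H}}\in(0,1)$). Together these give $AOD_{QP}<AOD_{HP}<AOD_{TP}$.

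For the cost statement I would avoid reworking the explicit sums and instead exploit the identity tying waiting cost to $AOD$. By the definition \eqref{aod-1}, expected cumulative waiting per consolidation cycle equals $AOD$ times the expected number of orders per consolidation cycle; combining this with Eq.~\eqref{eq:waiting cost} and the renewal identity $\mathbb{E}[K]\,\mathbb{E}[\text{orders per cycle}]=\lambda\,\mathbb{E}[L^{R}]$ from Eq.~\eqref{eq:replenishment cycle} yields, for each of the three models, $\mathbb{E}[WCost]=\omega\,\mathbb{E}[K]\,\mathbb{E}[W]=\omega\,\lambda\,\mathbb{E}[L^{R}]\cdot AOD$. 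Hence, once the expected replenishment cycle length $\mathbb{E}[L^{R}]$ is also held fixed across the three policies, each $\mathbb{E}[WCost]$ is the same positive multiple $\omega\lambda\,\mathbb{E}[L^{R}]$ of $AOD$, so $\mathbb{E}[WCost_{QP}]<\mathbb{E}[WCost_{HP}]<\mathbb{E}[WCost_{TP}]$ follows immediately from the $AOD$ ordering just established.

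The only genuine obstacle is the upper bound $AOD_{HP}<AOD_{TP}$, which rests on the nontrivial fact $\mathbb{E}^{2}[Y_{q_{H}}]>\mathbb{E}[Y_{q_{H}}(Y_{q_{H}}-1)]$; I treat this as delivered by Lemma~\ref{lem:useful1}. Everything else is bookkeeping: imposing the fixed-cycle-length constraint to collapse $AOD_{QP}$ and $AOD_{TP}$ into functions of the single scalar $m$, and recognizing the lower bound $AOD_{QP}<AOD_{HP}$ as nothing more than positivity of the variance of the truncated demand.
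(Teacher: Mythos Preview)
Your proof is correct and follows essentially the same route as the paper's: the constraint $q=\lambda T=\mathbb{E}[Y_{q_{H}}]$, the variance-positivity argument for $AOD_{QP}<AOD_{HP}$, and the appeal to Lemma~\ref{lem:useful1} for $AOD_{HP}<AOD_{TP}$ are exactly what the paper does. Your treatment of the $\mathbb{E}[WCost]$ ordering via the identity $\mathbb{E}[WCost]=\omega\lambda\,\mathbb{E}[L^{R}]\cdot AOD$ is in fact more explicit than the paper, which simply declares that part ``straightforward'' and omits it.
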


\begin{rem}
\label{rem:aod}
The intuition behind Theorem \ref{thm:AOD} is as follows. Under a QP, each shipment is released immediately after
the last order of the cycle arrives. However, under a HP, random some time elapses between the arrival of the last order and the shipment release epoch. That is, the entire consolidated load of a HP cycle has to wait for this extra time unlike the case under a QP. As a result, for a given expected consolidation cycle length, QP dominates the counterpart HP in terms of $AOD$.
\end{rem}

We proceed with a formal result demonstrating that TP performs worse than QP and HP in terms of $AOSD$, under a fixed
expected consolidation cycle length. However, under a fixed expected consolidation cycle length, QP does not necessarily achieve smaller $AOSD$ than HP, which is different from the corresponding result for $AOD$.

\begin{thm}
\label{thm:AOSD} Under a fixed expected consolidation cycle length,
$
AOSD_{QP}<AOSD_{TP}$ and $AOSD_{HP}<AOSD_{TP}$;
but, neither $AOSD_{QP}$ nor $AOSD_{HP}$ dominates the other one.
\end{thm}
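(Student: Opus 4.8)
The plan is to fix the common expected consolidation cycle length and rewrite all three $AOSD$ quantities against a single scalar. Fixing $\mathbb{E}[L^{C}]$ forces $T=\mathbb{E}[L^{C}]$ for TP, $q=\lambda\,\mathbb{E}[L^{C}]$ for QP, and $\mathbb{E}[Y_{q_{H}}]=\lambda\,\mathbb{E}[L^{C}]$ for HP, so the three policies share a common effective size $m\triangleq\lambda\,\mathbb{E}[L^{C}]=q=\mathbb{E}[Y_{q_{H}}]$ (exactly the matched-cycle framework already used in Theorem~\ref{thm:AOD}). Writing $\mathbb{E}[Y_{q_{H}+1}^{(3)}]\triangleq\mathbb{E}[Y_{q_{H}+1}(Y_{q_{H}+1}-1)(Y_{q_{H}+1}-2)]$ and using \eqref{AOSD_HP}--\eqref{AOSD_QP and TP}, this turns the statement into inequalities among $(m^{2}-1)/(3\lambda^{2})$, $m^{2}/(3\lambda^{2})$, and $\mathbb{E}[Y_{q_{H}+1}^{(3)}]/(3\lambda^{2}m)$.

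The two comparisons against TP are then short. For QP versus TP I would simply subtract: at matched cycle length $AOSD_{TP}-AOSD_{QP}=m^{2}/(3\lambda^{2})-(m^{2}-1)/(3\lambda^{2})=1/(3\lambda^{2})>0$, giving $AOSD_{QP}<AOSD_{TP}$ with no further work. For HP versus TP I would clear the positive denominator $3\lambda^{2}m$: the inequality $AOSD_{HP}<AOSD_{TP}$ is equivalent to $\mathbb{E}[Y_{q_{H}+1}^{(3)}]<m^{3}=\mathbb{E}^{3}[Y_{q_{H}}]$, which is precisely the strict inequality supplied by Lemma~\ref{lem:useful2} (with $X=Y$, $\mu=\lambda T_{H}$, $q=q_{H}$). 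So both TP comparisons reduce to already-established facts.

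The substance is the non-domination claim. After clearing denominators, the sign of $AOSD_{QP}-AOSD_{HP}$ equals the sign of $\Delta\triangleq m^{3}-m-\mathbb{E}[Y_{q_{H}+1}^{(3)}]$. Lemma~\ref{lem:useful2} tells me $m^{3}-\mathbb{E}[Y_{q_{H}+1}^{(3)}]>0$, so $\Delta=(m^{3}-\mathbb{E}[Y_{q_{H}+1}^{(3)}])-m$ is a difference of two positive quantities whose sign is genuinely parameter-dependent. I would exhibit both signs through two regimes. In the first, fix $\mu=\lambda T_{H}>1$ and let $q_{H}\to\infty$: then $Y_{q_{H}}\to Y$ and $Y_{q_{H}+1}\to Y$, so $m\to\mu$ and $\mathbb{E}[Y_{q_{H}+1}^{(3)}]\to\mathbb{E}[Y(Y-1)(Y-2)]=\mu^{3}$, whence $\Delta\to\mu^{3}-\mu-\mu^{3}=-\mu<0$, i.e. $AOSD_{QP}<AOSD_{HP}$ (the regime where HP nearly coincides with TP). In the second, fix $q_{H}\ge2$ and let $\mu\to\infty$: writing $\delta\triangleq q_{H}-\mathbb{E}[Y_{q_{H}}]$ and expanding the truncated moments to first nonvanishing order in the tail mass, I expect $\Delta\approx\delta\big[3(q_{H}-1)\mu-(3q_{H}^{2}-1)\big]$, which is strictly positive once $\mu$ is large, giving $AOSD_{HP}<AOSD_{QP}$. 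Hence neither quantity dominates.

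I expect the second regime to be the main obstacle, because as $\mu\to\infty$ both $m^{3}-\mathbb{E}[Y_{q_{H}+1}^{(3)}]$ and $m$ converge to $q_{H}$, so $\Delta\to0$ and its sign is a delicate cancellation requiring the second-order behavior of $\delta=q_{H}-\mathbb{E}[Y_{q_{H}}]$ and of $\mathbb{E}[Y_{q_{H}+1}^{(3)}]$, both governed by the Poisson tail weights $e^{-\mu}\mu^{q_{H}-1}/(q_{H}-1)!$ and $e^{-\mu}\mu^{q_{H}}/q_{H}!$. To avoid grinding through this asymptotic expansion, I would instead pin down one explicit instance --- for example $q_{H}=2$ with $\mu=10$, where a direct evaluation of $m$ and $\mathbb{E}[Y_{3}^{(3)}]$ yields $\Delta>0$ --- which already refutes $AOSD_{HP}\le AOSD_{QP}$ and, together with the first regime, completes the non-domination argument.
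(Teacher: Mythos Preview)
Your treatment of the two strict inequalities is identical to the paper's: both reduce $AOSD_{QP}<AOSD_{TP}$ to the triviality $m^{2}-1<m^{2}$ and reduce $AOSD_{HP}<AOSD_{TP}$ to the inequality $\mathbb{E}\left[Y_{q_{H}+1}^{(3)}\right]<\mathbb{E}^{3}\left[Y_{q_{H}}\right]$ supplied by Lemma~\ref{lem:useful2}.

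For the non-domination claim the paper is terser than you are: it simply asserts that numerical tests show $\Delta$ changes sign and exhibits one explicit instance ($q=5$, $q_{H}=6$, $\lambda T_{H}\approx 5.9199$) giving $\Delta>0$, with the other direction left to the phrase ``when $q_{H}$ is large.'' Your two-regime analysis makes the same point but more explicitly, and your leading-order expansion $\Delta\approx\delta\bigl[3(q_{H}-1)\mu-(3q_{H}^{2}-1)\bigr]$ is in fact correct. Two small repairs are needed. First, in your $q_{H}\to\infty$ regime the matched QP parameter $q=m=\mathbb{E}[Y_{q_{H}}]$ need not be an integer; the clean fix is to hold an integer $q\ge 2$ fixed, choose $T_{H}(q_{H})$ so that $\mathbb{E}[Y_{q_{H}}]=q$ for each $q_{H}>q$, and observe that $\lambda T_{H}(q_{H})\to q$ and hence $\Delta\to q^{3}-q-q^{3}=-q<0$. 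Second, your final sentence has the inequality reversed: the instance $q_{H}=2$, $\mu=10$ yields $\Delta>0$, i.e.\ $AOSD_{HP}<AOSD_{QP}$, which refutes the universal claim $AOSD_{QP}\le AOSD_{HP}$ (not $AOSD_{HP}\le AOSD_{QP}$); it is your first regime that refutes the latter. With those two corrections your argument is complete and slightly more informative than the paper's.
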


\begin{rem}
\label{rem:QPvsHPofAOSD}
Under a fixed expected consolidation cycle length, the fact that neither $AOSD_{QP}$ nor $AOSD_{HP}$ dominates the other one is different from the corresponding result for $AOD$. The reason is that $AOSD$ assigns a disproportional weight (via the squared delay) to large waiting times. While HP has a maximum limit on the waiting time, QP does not. Hence, depending on the value of this limit imposed by HP relative to the demand volume, QP or HP may be preferable in terms of $AOSD$.
\end{rem}

\subsubsection{Inventory Stagnancy/Flow Comparison}

Next, we proceed with an examination of $AIR$ as a critical measure of interest indicating inventory stagnancy/flow.
For the purpose of a sensible comparison, along with the dispach/service frequency, we also fix the inventory turnover frequency, i.e., we consider fixed values of expected consolidation and replenishment cycle lengths.
Since the exact expressions of $AIR$ we have obtained in this paper do not lend themselves for an exact comparison, the comparison results of Theorem \ref{thm:AIR comparison} hold in an approximate fashion.

%\textcolor{red}{For the sake of a fair comparison, our investigation below assumes fixed values of expected consolidation and replenishment cycle lengths. This assumption is also necessary for a balanced comparison of average costs among the three models. With this approach, as we fix the expected dispatch frequency within one replenishment cycle, we also fix the expected outbound dispatch costs per replenishment cycle along with the expected inbound replenishment costs that provide a basis of the comparison. From the comparison result in terms of $AOD$, we have the comparison result for the expected cumulative waiting cost per replenishment cycle. Thus, in order to compare the expected long run average costs of the integrated models under QP, TP, and HP, it suffices to compare the $AIR$ expressions only. This is the motivation of Theorem \ref{thm:AIR comparison}. Since the exact expressions of $AIR$ we obtained in this paper do not lend themselves for an exact comparison, the comparison results of Theorem \ref{thm:AIR comparison} hold in an approximate fashion. }
\begin{thm}
\label{thm:AIR comparison}
Under a fixed expected
consolidation cycle length and a fixed expected replenishment
cycle length, $AIR_{TP}\approx AIR_{HP}\gtrsim AIR_{QP}$.
As a result, under a fixed expected
consolidation cycle length and a fixed expected replenishment
cycle length,
$\mathbb{E}[HCost_{TP}]\approx\mathbb{E}[HCost_{HP}]\gtrsim \mathbb{E}[HCost_{QP}]$.
\end{thm}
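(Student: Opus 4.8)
The plan is to reduce the statement to the approximate $AIR$ expressions collected in Table \ref{t2}, after encoding the two ``fixed cycle length'' hypotheses by a pair of scalars. First I would translate the hypotheses. Fixing the expected \emph{consolidation} cycle length equates the entries $\mathbb{E}[L_{QP}^{C}]$, $\mathbb{E}[L_{TP}^{C}]$, $\mathbb{E}[L_{HP}^{C}]$ of Table \ref{t1}, i.e.\ $q/\lambda=T=\mathbb{E}[Y_{q_{H}}]/\lambda$, so that $q=\lambda T=\mathbb{E}[Y_{q_{H}}]=:\mu$; since $q$ is a positive integer this forces $\mu\geq 1$. Fixing the expected \emph{replenishment} cycle length equates $\mathbb{E}[L_{QP}^{R}]$, $\mathbb{E}[L_{TP}^{R}]$, $\mathbb{E}[L_{HP}^{R}]$ (exactly for QP, and through the approximations of Table \ref{t1} for TP and HP), giving $nq=Q_{T}+1=Q_{H}+1=:R$. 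Because any replenishment cycle contains at least one consolidation cycle ($n\geq 1$, $K_{H}\geq 1$), one also has $R\geq q=\mu$.

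Next I would substitute these identities into the approximate $AIR$ formulas. For QP, $AIR_{QP}=(n-1)q/2=(nq-q)/2=(R-\mu)/2$. For TP and HP, inserting $\lambda T=\mathbb{E}[Y_{q_{H}}]=\mu$ and $Q_{T}=Q_{H}=R-1$ into the two approximate entries of Table \ref{t2} renders them \emph{algebraically identical},
\[
AIR_{TP}\approx\frac{(R-1)(2\mu+R)}{2R}\approx AIR_{HP},
\]
which settles $AIR_{TP}\approx AIR_{HP}$ at once. The only genuine inequality remaining is $AIR_{HP}\gtrsim AIR_{QP}$, and this is a one-line computation:
\[
AIR_{HP}-AIR_{QP}\approx\frac{(R-1)(2\mu+R)}{2R}-\frac{R-\mu}{2}=\frac{R(3\mu-1)-2\mu}{2R}.
\]
Using $\mu\geq 1$ (so $3\mu-1>0$) and $R\geq\mu$ gives $R(3\mu-1)-2\mu\geq\mu(3\mu-1)-2\mu=3\mu(\mu-1)\geq 0$, whence the difference is nonnegative. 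This completes the chain $AIR_{TP}\approx AIR_{HP}\gtrsim AIR_{QP}$.

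For the holding-cost consequence I would invoke the definition of $AIR$ in Eq.~(\ref{air-1}): $\mathbb{E}[H_{\cdot}]=AIR_{\cdot}\,\mathbb{E}[L_{\cdot}^{R}]$ and $\mathbb{E}[HCost_{\cdot}]=h\,\mathbb{E}[H_{\cdot}]$. Since the expected replenishment cycle lengths are all held equal to $R/\lambda$ and $h$ is common to the three models, each $\mathbb{E}[HCost_{\cdot}]$ is the \emph{same} positive multiple of the corresponding $AIR_{\cdot}$, so the ordering of holding costs inherits that of the $AIR$'s, giving $\mathbb{E}[HCost_{TP}]\approx\mathbb{E}[HCost_{HP}]\gtrsim\mathbb{E}[HCost_{QP}]$.

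I expect the principal obstacle to be bookkeeping rather than analysis: checking that the two ``fixed length'' constraints can be imposed simultaneously and consistently through the approximate expressions, so that the common values $\mu$ and $R$ are well defined, and then pinning down the admissible ranges $\mu\geq 1$ and $R\geq\mu$ that make the sign of $R(3\mu-1)-2\mu$ determinate. Once these identities and ranges are in hand the algebra is routine, and the coincidence of the TP and HP approximate formulas is precisely what makes the relation $AIR_{TP}\approx AIR_{HP}$ immediate rather than requiring a separate estimate.
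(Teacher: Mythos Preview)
Your proposal is correct and follows essentially the same approach as the paper: both encode the fixed-cycle-length hypotheses via $q=\lambda T=\mathbb{E}[Y_{q_H}]$ and $nq\approx Q_T+1\approx Q_H+1$, observe that the approximate $AIR_{TP}$ and $AIR_{HP}$ formulas then coincide algebraically, and verify $AIR_{HP}\gtrsim AIR_{QP}$ by computing the difference and checking its sign. Your reparametrization $(\mu,R)=(q,nq)$ and your bound via $R\geq\mu\geq 1$ are just a relabeling of the paper's computation $\frac{2(nq-1)+n(q-1)}{2n}>0$; the second part on holding costs is handled identically.
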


\begin{rem}
\label{rem: variance} Under a fixed expected consolidation cycle length,
the means of the consolidated loads of each outbound dispatch under QP, HP, and TP are the same ($q=\mathbb{E}\left[Y_{q_{H}}\right]=\lambda T$), and the
variances are $0$, $\mathbb{VAR}\left[Y_{q_{H}}\right]$, and $\lambda T$,
respectively. From Lemma \ref{lem:useful1}, $\mathbb{VAR}\left[Y_{q_{H}}\right]<\mathbb{E}\left[Y_{q_{H}}\right]$, i.e.,
under a fixed expected consolidation cycle length, the
variance of dispatch quantity in each outbound under HP lies between those under QP and TP, and this result offers intuition to explain Theorem \ref{thm:AIR comparison}.
\end{rem}

\begin{rem}
Recalling the exact expressions of expected consolidation and replenishment cycle lengths in Table \ref{t1}, we note that, by assumption, Theorem \ref{thm:AIR comparison} requires
\begin{eqnarray}
 \lambda T=\mathbb{E}\left[Y_{q_{H}}\right]=q \quad\mbox{and}\quad M_{G_{T}}(Q_{T})=M_{G_{H}}(Q_{H})=n.\label{exactfixedleng}
\end{eqnarray}
Clearly, given values of expected consolidation and replenishment cycle lengths, say $\mathbb{E}\left[L^{C}\right]$ and $\mathbb{E}\left[L^{R}\right]$, respectively, there may or may not exist a solution
\[
(n,q,Q_T,T,Q_H,q_H, T_H)
\]
that satisfies Eq.~\eqref{exactfixedleng} such that
 $n,q,Q_T,Q_H$ and $q_H$ are all integers.
Our proof  of Theorem \ref{thm:AIR comparison} in Appendix \ref{proofofAIRcomp}, however, sets the approximate expression of the replenishment cycle length in  Table \ref{t1} equal to $\mathbb{E}[L^{R}]$ so that we work with the solution such that
\begin{eqnarray*}
\lambda T=\mathbb{E}\left[Y_{q_{H}}\right]=q\quad\mbox{and}\quad Q_{T}+1\approx Q_{H}+1\approx nq. %\label{fixedtwoexpectedcycles}
\end{eqnarray*}
Clearly, the above equations can always be satisfied by some carefully selected values of $n,q,Q_T,T,Q_H, q_H$, and $T_H$ which may or not be implementable (e.g., due to non-integer values) in the context of the three integrated models (e.g., due to non-integer solutions). Hence, we say that the theorem is applicable in practice whenever such an implementable solution exists.
\end{rem}

\subsubsection{Cost Comparison}

We let $AC_p$ denote the expected long run average cost per time unit for $p=QP, TP, HP$, and we again consider fixed dispatch/service and inventory turnover frequencies, i.e., fixed values of expected consolidation and replenishment cycle lengths. Then, the expected number of outbound dispatches within a replenishment cycle is the same under the three policies. As a result, both the resulting inventory replenishment costs per replenishment cycle and the resulting dispatch costs per replenishment cycle under the three policies are the same. This scenario, in turn, allows us to compare the resulting $AC_p$ values for
$p=QP, TP, HP$ without explicitly solving the three underlying optimization problems.

\begin{thm}
\label{thm:AC_comp} In the linear delay penalty case, under a fixed
expected consolidation cycle length and a fixed replenishment
cycle length,
\[
AC_{QP}\lesssim AC_{HP}\lesssim AC_{TP}.
\]
\end{thm}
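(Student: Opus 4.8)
The plan is to reduce this cost comparison to the two service-and-inventory orderings already proved, by showing that the remaining cost components coincide across the three policies once the two cycle lengths are fixed. Starting from the renewal-reward representation $AC_p=\mathbb{E}[TCost_p]/\mathbb{E}[L_p^R]$ together with the decomposition $\mathbb{E}[TCost_p]=\mathbb{E}[RCost_p]+\mathbb{E}[HCost_p]+\mathbb{E}[DCost_p]+\mathbb{E}[WCost_p]$, I would first fix a common expected consolidation cycle length $\mathbb{E}[L^C]$ and a common expected replenishment cycle length $\mathbb{E}[L^R]$ for $p=QP,TP,HP$. By hypothesis $\mathbb{E}[L_p^R]$ is then identical across the three policies, and since $\mathbb{E}[L^R]=\mathbb{E}[K_p]\mathbb{E}[L^C]$ for each policy, the expected number of dispatches per replenishment cycle $\mathbb{E}[K_p]=\mathbb{E}[L^R]/\mathbb{E}[L^C]$ is likewise identical.

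Next I would establish that the inbound replenishment and outbound dispatch costs per cycle agree across the three policies. By the martingale identity $\mathbb{E}[N_i]=\lambda\mathbb{E}[L^C]$ recorded in \eqref{eq:consolidation cycle} and \eqref{eq:consolidation size}, the expected replenishment quantity per cycle equals $\mathbb{E}[K_p]\,\lambda\mathbb{E}[L^C]$, a product of two quantities that are now fixed. Hence $\mathbb{E}[RCost_p]=A_R+c_R\mathbb{E}[K_p]\lambda\mathbb{E}[L^C]$ and $\mathbb{E}[DCost_p]=A_D\mathbb{E}[K_p]+c_D\mathbb{E}[K_p]\lambda\mathbb{E}[L^C]$ take the same value for all three $p$. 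Dividing by the common $\mathbb{E}[L^R]$, these two contributions to $AC_p$ cancel in every pairwise comparison, so that $AC_p$ is ordered exactly as the sum $\mathbb{E}[HCost_p]+\mathbb{E}[WCost_p]$.

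With this reduction in hand I would invoke the two earlier theorems. Theorem \ref{thm:AOD} supplies the strict chain $\mathbb{E}[WCost_{QP}]<\mathbb{E}[WCost_{HP}]<\mathbb{E}[WCost_{TP}]$ for the waiting-cost term, and Theorem \ref{thm:AIR comparison} supplies $\mathbb{E}[HCost_{TP}]\approx\mathbb{E}[HCost_{HP}]\gtrsim\mathbb{E}[HCost_{QP}]$ for the holding-cost term. Adding these componentwise gives $\mathbb{E}[HCost_{QP}]+\mathbb{E}[WCost_{QP}]\lesssim\mathbb{E}[HCost_{HP}]+\mathbb{E}[WCost_{HP}]$, since QP is favored in both terms, and $\mathbb{E}[HCost_{HP}]+\mathbb{E}[WCost_{HP}]\lesssim\mathbb{E}[HCost_{TP}]+\mathbb{E}[WCost_{TP}]$, since the holding terms are approximately equal while the waiting term strictly favors HP. Dividing through by the common $\mathbb{E}[L^R]$ then yields $AC_{QP}\lesssim AC_{HP}\lesssim AC_{TP}$.

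The delicate point is the bookkeeping of the approximation symbol $\lesssim$. The waiting-cost ordering is an exact strict inequality, whereas the holding-cost ordering inherited from Theorem \ref{thm:AIR comparison} is only approximate. In the QP-versus-HP comparison both ingredients push in the same direction, so the conclusion is robust. The genuine care is needed in the HP-versus-TP step, where the approximately-equal holding terms must not swamp the strict waiting-cost gap favoring HP; here I would stress that the $\lesssim$ in the final statement inherits precisely the approximation already present in Theorem \ref{thm:AIR comparison}, with no new approximation introduced when the two orderings are combined. This is why the theorem is stated with $\lesssim$ rather than strict inequality, and it is the only nontrivial obstacle, the rest being the cancellation argument above.
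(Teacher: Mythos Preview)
Your proposal is correct and follows essentially the same route as the paper: fix the two cycle lengths, use $\mathbb{E}[L^R]=\mathbb{E}[K_p]\mathbb{E}[L^C]$ to equalize $\mathbb{E}[K_p]$ and hence the replenishment and dispatch costs, then invoke Theorems~\ref{thm:AOD} and~\ref{thm:AIR comparison} on the remaining waiting and holding terms. Your added discussion of how the approximation symbol propagates through the HP-versus-TP step is more explicit than the paper's, but the underlying argument is the same.
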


Observe that the theorem relies on the approximation result of Eq.~(\ref{appox of EK}) which is obtained by treating $K_{H}$ as a continuous random variable, an assumption which has been used in \cite{CL00,WALD44} as well for obtaining similar approximations. As mentioned in \cite{Axsa01} and demonstrated in \cite{CML06}, the approximation extremely well except in the cases where the distribution warehouse is operated as a transshipment point (i.e., without holding any inventory so that there is only a single consolidation cycle in a replenishment cycle). Hence, the approximation is very effective for all practical purposes in which the warehouse carries inventory. If there is indeed only one consolidation cycle within a replenishment cycle, so that  no inventory is held at the vendor's warehouse, then all of our comparative results are still true. This is because (i) there is no inventory holding cost term to worry about and (ii) the comparison results in terms of the service measures $AOD$ and $AOSD$ still hold, so that the comparison results in terms of the average cost criteria are remain true.

\begin{rem}
In the squared delay penalty case, by a similar argument (using Theorems \ref{thm:AOSD} and \ref{thm:AIR comparison}), we have that under a fixed
expected consolidation length and a fixed replenishment
cycle length,  $AC_{QP}\lesssim AC_{TP}$ and $AC_{HP} \lesssim AC_{TP}$ but neither $AC_{QP}$ nor $AC_{HP}$ beats the other one.

\end{rem}

\section{\label{sec:Conclusion}Conclusion}

In this paper, we have focused on the exact modeling of HPs in an integrated  framework for making inbound replenishment and outbound dispatch decisions simultaneously. The accompanying goal of our modeling effort has been offering an analytical comparison of the resulting model with the previously developed counterpart models that consider TPs and QPs. Considering the $AOD$, $AOSD$ and $AIR$, as well as the cost, as potential evaluation criteria, we have obtained some insightful comparative results in terms of the relative performance of the models of interest. We have also offered some new and refined properties of truncated Poisson random variables.
Our comparative results regarding the cost performance of the models is notable because, unlike the previous work on the topic, it does not require numerical validation or optimization regardless of the input parameters of the underlying models.

Overall, our results demonstrate the relative impact and value of HPs over QPs and TPs, analytically. One fundamental finding in this regards is the analytically provable value of HPs over TPs, in general, as well as its potential value over QPs in terms of $AOSD$, i.e., when dealing with impatient customers. Collectively, these findings are of practical value in the context of  design and operation of an integrated framework for inventory-transportation systems and in selecting an operational policy for shipment consolidation practices in an integrated framework.

Interesting and challenging areas of future research motivated by and extending our results include consideration of more general demand processes (renewal processes or Brownian motion) and investigation of the structure of exact optimal policies relative to QPs, TPs, and HPs.

\bibliographystyle{plain}
\bibliography{References}

\newpage
\setcounter{page}{1}
\pagenumbering{roman}
\appendix

\section{Online Appendix}
\subsection{Squared Delay Penalty \label{sec:Squared-Delay}}

Let $\tau_{n}$ be the first hitting time of $n$ with respect to $N(t)$, where $n$ is a positive integer, so that $\tau_{n}$ is distributed as $\text{gamma}(n,\lambda)$. Let the positive integer $q$ and $T>0$ denote two parameters of the HP under our consideration. Then, the expected cumulative squared waiting per shipment consolidation cycle is
\begin{eqnarray}
\mathbb{E}[W'_{HP}]\!\!\!\!\!
& = & \!\!\! \mathbb{E}\left[\int_{0}^{\tau_{q}\wedge T}(\tau_{q}\wedge T-t)^{2}dN(t)\right]\nonumber \\
\!\!\!\!\!\!& = & \!\!\!\!\!\mathbb{E}\left[\int_{0}^{\tau_{q}\wedge T}\!\!\!(\tau_{q}\wedge T)^{2}dN(t)\right] \!\!- \!\!2\mathbb{E}\left[\int_{0}^{\tau_{q}\wedge T}\!\!t(\tau_{q}\wedge T)dN(t)\right] \!\!+\mathbb{E}\left[\int_{0}^{\tau_{q}\wedge T}t^{2}dN(t)\right].\label{eq:quadratic penalty-1}
\end{eqnarray}
In order to compute the three terms on the right hand side
of Eq. (\ref{eq:quadratic penalty-1}), first, let us use
\begin{eqnarray}
\mathbb{E}[\tau_{q}^{2}1_{\tau_{q}\leq T}]&=&\frac{q(q+1)}{\lambda^{2}}\mathbb{P}(N(T)\geq q+2),
\quad \mbox{so that}
\nonumber\\
\mathbb{E}\left[\int_{0}^{\tau_{q}\wedge T}(\tau_{q}\wedge T)^{2}dN(t)\right] & = & \mathbb{E}\left[(\tau_{q}\wedge T)^{2}N(\tau_{q}\wedge T)\right] \nonumber\\
&=&  q\mathbb{E}\left[\tau_{q}^{2}1_{\tau_{q}\leq T}\right]+T^{2}\mathbb{E}\left[N(T)1_{N(T)\leq q-1}\right]\nonumber \\
 & = & \frac{q^{2}(q+1)}{\lambda^{2}}\mathbb{P}(N(T)\geq q+2)+T^{2}\sum_{n=0}^{q-1}n\mathbb{P}(N(T)=n).\label{first term}
\end{eqnarray}
Second, observe that $2\mathbb{E}\left[\int_{0}^{\tau_{q}\wedge T}t(\tau_{q}\wedge T)dN(t)\right]$ is given by
\begin{eqnarray}
& & \!\!\!\!\!\!\!\!\!\!\!\!\!\!\!\!2\int_{0}^{T}\mathbb{E}\left[\int_{0}^{\tau_{q}}t\tau_{q}dN(t)|\tau_{q}=s\right]f_{\tau_{q}}(s)ds+2T\mathbb{E}\left[\int_{0}^{T}tdN(t)1_{N(T)\leq q-1}\right]\nonumber \\
 & = & 2\int_{0}^{T}s\left(\mathbb{E}\left[\sum_{i=1}^{q-1}\tau_{i}|\tau_{q}=s\right]+s\right)f_{\tau_{q}}(s)ds+2T\sum_{n=0}^{q-1}\mathbb{E}\left[\sum_{i=1}^{n}\tau_{i}|N(T)=n\right]\mathbb{P}(N(T)=n)\nonumber \\
 & = & 2\int_{0}^{T}s((q-1)\frac{s}{2}+s)f_{\tau_{q}}(s)ds+2T\sum_{n=0}^{q-1}n\frac{T}{2}\mathbb{P}(N(T)=n)\nonumber \\
 & = & (q+1)\mathbb{E}\left[\tau_{q}^{2}1_{\tau_{q}\leq T}\right]+T^{2}\sum_{n=0}^{q-1}n\mathbb{P}(N(T)=n)\nonumber \\
 & = & \frac{q(q+1)^{2}}{\lambda^{2}}\mathbb{P}(N(T)\geq q+2)+T^{2}\sum_{n=0}^{q-1}n\mathbb{P}(N(T)=n),\label{second term}
\end{eqnarray}
where $f_{\tau_{q}}(\cdot)$ is the density of $\tau_{q}\sim\text{gamma}(q,\lambda)$,
and the third equality is derived from Lemma 4.5.1 and Theorem 4.5.2
in \cite[p. 322, 325]{RESN02}. Third, noting that $\int_{0}^{t}s^{2}dN(s)-\frac{1}{3}\lambda t^{3}$
is a martingale, and $\tau_{q}\wedge T$ is a bounded stopping time,
we apply the martingale stopping theorem to obtain
\begin{eqnarray}
 &&\mathbb{E}\left[\int_{0}^{\tau_{q}\wedge T}t^{2}dN(t)\right]=  \frac{1}{3}\lambda\mathbb{E}\left[(\tau_{q}\wedge T)^{3}\right]=  \frac{1}{3}\lambda\mathbb{E}\left[\tau_{q}^{3}1_{\tau_{q}\leq T}\right]+\frac{1}{3}\lambda T^{3}\mathbb{P}(N(T)\leq q-1)\nonumber \\
&&\quad \quad =  \frac{(q+2)(q+1)q}{3\lambda^{2}}\mathbb{P}(N(T)\geq q+3)+\frac{1}{3\lambda^{2}}\sum_{m=0}^{q+2}m(m-1)(m-2)\mathbb{P}(N(T)=m)\nonumber \\
&&\quad \quad = \frac{1}{3\lambda^{2}}\mathbb{E}\left[Y_{q+2}(Y_{q+2}-1)(Y_{q+2}-2)\right],\label{thirdterm}
\end{eqnarray}
where we use
\[
\mathbb{E}\left[\tau_{q}^{3}1_{\tau_{q}\leq T}\right]=\frac{(q+2)(q+1)q}{\lambda^{3}}\mathbb{P}(N(T)\geq q+3),
\]
and
\[
\lambda T^{3}\mathbb{P}(N(T)=n)=\frac{(n+3)(n+2)(n+1)}{\lambda^{2}}\mathbb{P}(N(T)=n+3).
\]

Substituting Eqs. (\ref{first term}), (\ref{second term}) and (\ref{thirdterm})
in Eq. (\ref{eq:quadratic penalty-1}), we obtain
\begin{eqnarray*}
\mathbb{E}\left[W'_{HP}\right] & = & \frac{(q+1)q(q-1)}{3\lambda^{2}}\mathbb{P}(N(T)\geq q+2)+\frac{1}{3\lambda^{2}}\sum_{m=0}^{q+1}m(m-1)(m-2)\mathbb{P}(N(T)=m)\nonumber \\
 & = & \frac{1}{3\lambda^{2}}\mathbb{E}\left[Y_{q+1}(Y_{q+1}-1)(Y_{q+1}-2)\right].
\end{eqnarray*}

\subsection{Proofs\label{sec:Proofs}}

\subsubsection{Proof of Lemma \ref{lem:martin}}
\begin{proof}
Since the Poisson process has stationary independent increments, for $s<t$, we have
\begin{eqnarray*}
\mathbb{E}\Bigl[\int_{0}^{t}N(u)du\mid\mathcal{G}_{s}\Bigr] & = & \int_{0}^{s}N(u)du+\mathbb{E}\Bigl[\int_{s}^{t}N(u)du\mid\mathcal{G}_{s}\Bigr]\\
 & = & \int_{0}^{s}N(u)du+(t-s)N(s)+\mathbb{E}\Bigl[\int_{0}^{t-s}N(u)du\Bigr]\\
 & = & \int_{0}^{s}N(u)du+(t-s)N(s)+\frac{1}{2}\lambda(t-s)^{2},
\\
\frac{1}{2\lambda}\mathbb{E}[N^{2}(t)\mid\mathcal{G}_{s}]&=&\frac{1}{2\lambda}\Bigl(N^{2}(s)+2\lambda(t-s)N(s)+\lambda(t-s)+\lambda^{2}(t-s)^{2}\Bigr), \quad \mbox{and}
\\
\frac{1}{2\lambda}\mathbb{E}[N(t)\mid\mathcal{G}_{s}]&=&\frac{1}{2\lambda}\left(N(s)+\lambda(t-s)\right).
\end{eqnarray*}
Therefore,
\begin{eqnarray*}
 \mathbb{E}\Bigl[\int_{0}^{t}N(u)du-\frac{1}{2\lambda}N^{2}(t)+\frac{1}{2\lambda}N(t)\mid\mathcal{G}_{s}\Bigr]
 & = & \int_{0}^{s}N(u)du-\frac{1}{2\lambda}N^{2}(s)+\frac{1}{2\lambda}N(s),
\end{eqnarray*}
which shows that $W(t)-\frac{1}{2\lambda}N^{2}(t)+\frac{1}{2\lambda}N(t)$
is a martingale.
\end{proof}

\subsubsection{Proof of Lemma \ref{lem:key}}
\begin{proof}
Using the relationship between Poisson and gamma distribution,
\begin{eqnarray*}
\mathbb{E}\left[X_{q}^{(k)}\right] & = & \sum_{x=0}^{q}x^{(k)}\frac{e^{-\mu}\mu^{x}}{x!}+\sum_{x=q+1}^{\infty}q^{(k)}\frac{e^{-\mu}\mu^{x}}{x!}
 =  \mu^{k}\mathbb{P}(X\leq q-k)+q^{(k)}\mathbb{P}(X\geq q+1)\\
 & = & \mu^{k}\mathbb{P}(V_{q-k+1}>\mu)+q^{(k)}\mathbb{P}(V_{q+1}\leq\mu)
 =  \int_{\mu}^{\infty}\mu^{k}\frac{v^{q-k}e^{-v}}{(q-k)!}dv+\int_{0}^{\mu}v^{k}\frac{v^{q-k}e^{-v}}{(q-k)!}dv\\
 & = & \mathbb{E}\left[V_{q-k+1}^{k}\wedge\mu^{k}\right].
\end{eqnarray*}
Also, recalling the following two properties of Poisson random variable,
\[
\mu^{k}\mathbb{P}(X=q-k)=q^{(k)}\mathbb{P}(X=q) \quad \mbox{and} \quad
\frac{d}{d\mu}\mathbb{P}(X\leq q)=-\mathbb{P}(X=q),
\]
it is straightforward to show that
\begin{eqnarray*}
\frac{d}{d\mu}\mathbb{E}\left[X_{q}^{(k)}\right]= k\mu^{k-1}\mathbb{P}(X\leq q-k)-\mu^{k}\mathbb{P}(X=q-k)+q^{(k)}\mathbb{P}(X=q)
 =  k\mu^{k-1}\mathbb{P}(X\leq q-k).
\end{eqnarray*}
\end{proof}

\subsubsection{Proof of Lemma \ref{lem:useful1}}
\begin{proof}
Using Lemma \ref{lem:key}, it is straightforward to show that
\begin{eqnarray}
\frac{d}{d\mu}\frac{\mathbb{E}^{2}\left[X_{q}\right]}{\mathbb{E}\left[X_{q}^{(2)}\right]}=\frac{2\mathbb{E}\left[X_{q}\right]}{\mathbb{E}^{2}\left[X_{q}^{(2)}\right]}\left(\mathbb{P}(X\leq q-1)\mathbb{E}\left[X_{q}^{(2)}\right]-\mu\mathbb{E}\left[X_{q}\right]\mathbb{P}(X\leq q-2)\right).\label{eq:main1}
\end{eqnarray}
Noting that $\mu^{k}\mathbb{P}(X=q-k)=q^{(k)}\mathbb{P}(X=q)$, we have
\begin{equation}
\mathbb{E}\left[X_{q}^{(2)}\right]=q^{(2)}\mathbb{P}(X\geq q+1)+\sum_{n=0}^{q}n^{(2)}\mathbb{P}(X=n)=q^{(2)}\mathbb{P}(X\geq q+1)+\mu^{2}\mathbb{P}(X\leq q-2),\label{eq:cross1}
\end{equation}
and
\begin{equation}
\mathbb{E}\left[X_{q}\right]=q\mathbb{P}(X\geq q+1)+\sum_{n=0}^{q}n\mathbb{P}(X=n)=q\mathbb{P}(X\geq q+1)+\mu\mathbb{P}(X\leq q-1).\label{eq:cross2}
\end{equation}
Substituting Eqs.~(\ref{eq:cross1}) and (\ref{eq:cross2}) in Eq.
(\ref{eq:main1}), and using $\mu\mathbb{P}(X=n-1)=n\mathbb{P}(X=n)$,
we have
\begin{align*}
\frac{d}{d\mu}\frac{\mathbb{E}^{2}\left[X_{q}\right]}{\mathbb{E}\left[X_{q}^{(2)}\right]} & =\frac{2q\mathbb{E}\left[X_{q}\right]\mathbb{P}(X\geq q+1)}{\mathbb{E}^{2}\left[X_{q}^{(2)}\right]}\left((q-1)\mathbb{P}(X\leq q-1)-\mu\mathbb{P}(X\leq q-2)\right)\\
 & =\frac{2q\mathbb{E}\left[X_{q}\right]\mathbb{P}(X\geq q+1)}{\mathbb{E}^{2}\left[X_{q}^{(2)}\right]}\sum_{n=0}^{q-1}(q-1-n)\mathbb{P}(X=n)>0,
\end{align*}
which implies that $\frac{\mathbb{E}^{2}\left[X_{q}\right]}{\mathbb{E}\left[X_{q}^{(2)}\right]}$
is increasing in $\mu$. Moreover, using Lemma \ref{lem:key},
\[
\lim_{\mu\downarrow0}\frac{\mathbb{E}^{2}\left[X_{q}\right]}{\mathbb{E}\left[X_{q}^{(2)}\right]}=\lim_{\mu\downarrow0}\frac{\mathbb{E}^{2}\left[(V_{q}/\mu)\wedge1\right]}{\mathbb{E}\left[(V_{q-1}/\mu)^{2}\wedge1\right]}=1.
\]
Thus, $\mathbb{E}^{2}\left[X_{q}\right]>\mathbb{E}\left[X_{q}^{(2)}\right]$.
\end{proof}

\subsubsection{Proof of Lemma \ref{lem:var}}
\begin{proof}
It can be easily verified that
\begin{eqnarray*}
\frac{d}{d\mu}\mathbb{E}\left[X|X\in A\right] & = & \frac{d}{d\mu}\frac{\sum_{x\in A}x\frac{\mu^{x}}{x!}}{\sum_{x\in A}\frac{\mu^{x}}{x!}}
 =  \frac{\sum_{x\in A}x^{2}\frac{\mu^{x-1}}{x!}\sum_{x\in A}\frac{\mu^{x}}{x!}-\sum_{x\in A}x\frac{\mu^{x}}{x!}\sum_{x\in A}x\frac{\mu^{x-1}}{x!}}{(\sum_{x\in A}\frac{\mu^{x}}{x!})^{2}}\\
 & = & \frac{\sum_{x\in A}x^{2}\frac{\mu^{x}}{x!}}{\mu\sum_{x\in A}\frac{\mu^{x}}{x!}}-\frac{(\sum_{x\in A}x\frac{\mu^{x}}{x!})^{2}}{\mu(\sum_{x\in A}\frac{\mu^{x}}{x!})^{2}}
 =  \frac{1}{\mu}\mathbb{VAR}\left[X|X\in A\right]\geq0.
\end{eqnarray*}
\end{proof}

\subsubsection{Proof of Lemma \ref{lem:useful2}}
\begin{proof}
Using Lemma \ref{lem:key}, it is straightforward to obtain
\begin{equation}
\frac{d}{d\mu}\frac{\mathbb{E}^{3}\left[X_{q}\right]}{\mathbb{E}\left[X_{q+1}^{(3)}\right]}=\frac{3\mathbb{E}^{2}\left[X_{q}\right]}{\mathbb{E}^{2}\left[X_{q+1}^{(3)}\right]}\left(\mathbb{P}(X\leq q-1)\mathbb{E}\left[X_{q+1}^{(3)}\right]-\mu^{2}\mathbb{E}\left[X_{q}\right]\mathbb{P}(X\leq q-2)\right).\label{eq:main2}
\end{equation}
Noting that $\mu^{k}\mathbb{P}(X=q-k)=q^{(k)}\mathbb{P}(X=q)$, we
have
\begin{equation}
\mathbb{E}\left[X_{q+1}^{(3)}\right]=(q^{2}-1)\mathbb{P}(X\geq q+2)+\sum_{n=0}^{q+1}n^{(3)}\mathbb{P}(X=n)=(q^{2}-1)\mathbb{P}(X\geq q+2)+\mu^{3}\mathbb{P}(X\leq q-2),\label{eq:cross3}
\end{equation}
and
\begin{equation}
\mathbb{E}\left[X_{q}\right]=q\mathbb{P}(X\geq q+1)+\sum_{n=0}^{q}n\mathbb{P}(X=n)=q\mathbb{P}(X\geq q+1)+\mu\mathbb{P}(X\leq q-1).\label{eq:cross4}
\end{equation}
Substituting Eqs.~(\ref{eq:cross3}) and (\ref{eq:cross4}) in Eq.~(\ref{eq:main2}), we have
\begin{align}
 & \frac{d}{d\mu}\frac{\mathbb{E}^{3}\left[X_{q}\right]}{\mathbb{E}\left[X_{q+1}^{(3)}\right]}\nonumber \\
= & \frac{3\mu q\mathbb{E}^{2}\left[X_{q}\right]\mathbb{P}(X\geq q+2)\mathbb{P}(X\leq q-2)}{\mathbb{E}^{2}\left[X_{q+1}^{(3)}\right]}\left(\frac{(q^{2}-1)\mathbb{P}(X\leq q-1)}{\mu\mathbb{P}(X\leq q-2)}-\frac{\mu\mathbb{P}(X\geq q+1)}{\mathbb{P}(X\geq q+2)}\right).\label{eq: main3}
\end{align}
From Lemma \ref{lem:var}, we note that
\[
\frac{\mu\mathbb{P}(X\leq q-2)}{\mathbb{P}(X\leq q-1)}=\frac{\mu\sum_{x=0}^{q-2}\frac{\mu^{x}}{x!}}{\sum_{x=0}^{q-1}\frac{\mu^{x}}{x!}}=\frac{\sum_{x=0}^{q-1}x\frac{\mu^{x}}{x!}}{\sum_{x=0}^{q-1}\frac{\mu^{x}}{x!}}=\mathbb{E}\left[X|X\leq q-1\right]
\]
is increasing in $\mu$ (from 0 to $q-1$). Likewise,
\[
\frac{\mu\mathbb{P}(X\geq q+1)}{\mathbb{P}(X\geq q+2)}=\mathbb{E}\left[X|X\geq q+2\right]
\]
is increasing in $\mu$ (from $q+2$ to $\infty$). Therefore, the
last expression in parenthesis in Eq. (\ref{eq: main3}) is decreasing
in $\mu$, positive (and unbounded) for small $\mu$ and negative
(and unbounded) for large $\mu$, which implies that there exists
some $\mu'$, $\mathbb{E}^{3}\left[X_{q}\right]/\mathbb{E}\left[X_{q+1}^{(3)}\right]$
is increasing on $(0,\mu')$ and decreasing on $(\mu',\infty)$. Moreover,
using Lemma \ref{lem:key},
\[
\lim_{\mu\downarrow0}\frac{\mathbb{E}^{3}\left[X_{q}\right]}{\mathbb{E}\left[X_{q+1}^{(3)}\right]}=\lim_{\mu\downarrow0}\frac{\mathbb{E}^{3}\left[(V_{q}/\mu)\wedge1\right]}{\mathbb{E}\left[(V_{q-1}/\mu)^{3}\wedge1\right]}=1,
\]
and
\[
\lim_{\mu\rightarrow\infty}\frac{\mathbb{E}^{3}\left[X_{q}\right]}{\mathbb{E}\left[X_{q+1}^{(3)}\right]}=\frac{q^{3}}{(q+1)^{(3)}}=\frac{q^{2}}{q^{2}-1}>1.
\]
Thus, $\mathbb{E}^{3}\left[X_{q}\right]>\mathbb{E}\left[X_{q+1}^{(3)}\right]$ for all
$\mu>0$.
\end{proof}

\subsubsection{Proof of Theorem \ref{thm:AOD}}
\begin{proof}
The proof follows from the proof of the counterpart result in \cite{WCC20} which considers pure consolidation policies only; but, it is given here for the sake of completeness. We consider a fixed $\mathbb{E}\left[L^{C}\right]$ and use the following notations
for the corresponding policy parameters under this $\mathbb{E}\left[L^{C}\right]$
value: QP with parameter $q$, TP with parameter $T$, HP with parameters
$q_{H}$ and $T_{H}$. Recalling the $\mathbb{E}\left[L^{C}\right]$ expressions
in Table \ref{t1}, we note that, by assumption,
\begin{equation}
\frac{1}{\lambda}\mathbb{E}\left[Y_{q_{H}}\right]=\frac{q}{\lambda},\label{Q-HP-L}
\end{equation}
and
\begin{equation}
\frac{1}{\lambda}\mathbb{E}\left[Y_{q_{H}}\right]=T.\label{HP-T-L}
\end{equation}
Next, recalling the results in Table \ref{t2} and reiterating the
assumption of fixed $\mathbb{E}\left[L^{C}\right]$ values for all the policies
of interest, we proceed with showing that
$
(q-1)q<\mathbb{E}\left[Y_{q_{H}}(Y_{q_{H}}-1)\right],
$
and
$\mathbb{E}\left[Y_{q_{H}}(Y_{q_{H}}-1)\right]<\lambda^{2}T^{2}.
$
In fact, recalling the assumption in Eq.~(\ref{Q-HP-L}),
\[
\mathbb{E}\left[Y_{q_{H}}(Y_{q_{H}}-1)\right]=\mathbb{E}\left[Y_{q_{H}}^{2}\right]-q=\mathbb{VAR}\left[Y_{q_{H}}\right]+\mathbb{E}^{2}\left[Y_{q_{H}}\right]-q>q^{2}-q.
\]
From Lemma \ref{lem:useful1}, and recalling the assumption in Eq.~(\ref{HP-T-L}), we have
$
\mathbb{E}\left[Y_{q_{H}}(Y_{q_{H}}-1)\right]<\mathbb{E}^{2}\left[Y_{q_{H}}\right]=\lambda^{2}T^{2}.
$

The proof of the second part of the theorem is straightforward, and, hence, it is omitted.
\end{proof}

\subsubsection{Proof of Theorem \ref{thm:AOSD}}

\begin{proof}
We consider a fixed $\mathbb{E}\left[L^{C}\right]$ and use the following notation
for the corresponding policy parameters under this $\mathbb{E}\left[L^{C}\right]$
value: QP with parameter $q$, TP with parameter $T$, HP with parameters
$q_{H}$ and $T_{H}$. Recalling the $\mathbb{E}\left[L^{C}\right]$ expressions
in Table \ref{t1}, we note that, by assumption,
\begin{equation}
\frac{q}{\lambda}=T,\label{Q-T-L}
\end{equation}
and
\begin{equation}
\frac{\mathbb{E}\left[Y_{q_{H}}\right]}{\lambda}=T.\label{HP-T-L-2}
\end{equation}

Next, recalling the results in Table \ref{t2} and reiterating the
assumption of fixed $\mathbb{E}\left[L^{C}\right]$ value for all the policies
of interest, we proceed with showing that
\begin{equation}
\frac{q^{3}-q}{3\lambda^{2}}<\lambda T^{3}/3,\label{Q-T-QW}
\end{equation}
and
\begin{equation}
\frac{\mathbb{E}\left[Y_{q_{H}+1}^{(3)}\right]}{3\lambda^{2}}<\lambda T^{3}/3.\label{HP-T-QW}
\end{equation}
Recalling the assumption in Eq.~(\ref{Q-T-L}), we can easily see
that inequality (\ref{Q-T-QW}) holds. From Lemma \ref{lem:useful2},
and recalling the assumption in Eq. (\ref{HP-T-L-2}), we have
$
\mathbb{E}\left[Y_{q_{H}+1}^{(3)}\right]<\mathbb{E}^{3}\left[Y_{q_{H}}\right]=(\lambda T)^{3},
$
which verifies inequality (\ref{HP-T-QW}).

We need to demonstrate that under a fixed expected consolidation cycle
length $\mathbb{E}\left[L^{C}\right]$, neither $AOSD_{QP}$ nor $AOSD_{HP}$ dominates the
other one. To see this, letting $q=\mathbb{E}\left[Y_{q_{H}}\right]$, we only need to compare $q^{3}-q$
with $\mathbb{E}\left[Y_{q_{H}+1}^{(3)}\right]$. The numerical tests demonstrate
that $q^{3}-q>\mathbb{E}\left[Y_{q_{H}+1}^{(3)}\right]$ when $q_{H}$ is small,
but $q^{3}-q<\mathbb{E}\left[Y_{q_{H}+1}^{(3)}\right]$ when $q_{H}$ is large. For example, let the arrival process $N(t)$ be a Poisson process
with rate $1.$ We select $q=5$ as the parameter under QP, $q_{H}=6$
and $T_{H}=5.9199$ as the parameters under HP. We can numerically
verify that QP and HP have the same expected consolidation cycle
length, i.e., $q=\mathbb{E}\left[Y_{q_{H}}\right]$. In this case, $q^{3}-q=120$,
and $\mathbb{E}\left[Y_{q_{H}+1}^{(3)}\right]=\mathbb{E}\left[Y_{q_{H}+1}(Y_{q_{H}+1}-1)(Y_{q_{H}+1}-2)\right]=112.8573$. In this example, we demonstrate that under a fixed expected consolidation cycle
length, QP does not necessarily achieve smaller $AOSD$ than HP, which is different from the $AOD$ case.
\end{proof}

\subsubsection{Proof of Theorem \ref{thm:AIR comparison}}\label{proofofAIRcomp}
\begin{proof}
We consider fixed $\mathbb{E}\left[L^{C}\right]$ and $\mathbb{E}\left[L^{R}\right]$,
and all possible policies under the $\mathbb{E}\left[L^{C}\right]$ and $\mathbb{E}\left[L^{R}\right]$
values. Recalling the $\mathbb{E}\left[L^{C}\right]$ and $\mathbb{E}\left[L^{R}\right]$
expressions in Table \ref{t1}, we note that, by assumption,
\begin{eqnarray}
\lambda T=\mathbb{E}\left[Y_{q_{H}}\right]=q,\qquad Q_{T}+1\approx Q_{H}+1\approx nq. \label{fixedtwoexpectedcycles}
\end{eqnarray}

Next, recalling the $AIR$ expressions in Table \ref{t2}, it's sufficient to show
\begin{eqnarray}
\frac{Q_{T}(2\lambda T+Q_{T}+1)}{2(Q_{T}+1)}\approx \frac{Q_{H}(2\mathbb{E}\left[Y_{q_{H}}\right]+Q_{H}+1)}{2(Q_{H}+1)}, \label{TPvsHP_AIR}
\end{eqnarray}
and
\begin{eqnarray}
\frac{Q_{H}(2\mathbb{E}\left[Y_{q_{H}}\right]+Q_{H}+1)}{2(Q_{H}+1)}\gtrsim \frac{(n-1)q}{2}. \label{HPvsQP_AIR}
\end{eqnarray}

Obviously, Eq.~\eqref{TPvsHP_AIR} follows from Eq.~\eqref{fixedtwoexpectedcycles}. Moreover, using Eq.~\eqref{fixedtwoexpectedcycles}, we have
\begin{align*}
\frac{Q_{H}(2\mathbb{E}\left[Y_{q_{H}}\right]+Q_{H}+1)}{2(Q_{H}+1)}-\frac{(n-1)q}{2} \approx &\frac{(nq-1)(n+2)}{2n}-\frac{(n-1)q}{2}\\
= & \frac{2(nq-1)+n(q-1)}{2n}>0,
\end{align*}
which implies Eq.~\eqref{HPvsQP_AIR} holds. In sum, we have $AIR_{TP}\approx AIR_{HP}\gtrsim AIR_{QP}$.

The proof of the second part of the theorem is straightforward, and, hence, it is omitted.
\end{proof}

\subsubsection{Proof of Theorem \ref{thm:AC_comp}}
\begin{proof}
Considering a fixed replenishment cycle length $\mathbb{E}[L^{R}]$ and a fixed expected consolidation cycle length $\mathbb{E}[L^{C}]$, and recalling the results in Table \ref{t1}, we have
\[
\mathbb{E}[K_{T}]=\mathbb{E}[K_{H}]=n,
\]
\[
\mathbb{E}[RCost_{QP}]=\mathbb{E}[RCost_{HP}]=\mathbb{E}[RCost_{TP}], \quad \mbox{and}
\]
\[
\mathbb{E}[DCost_{QP}]=\mathbb{E}[DCost_{HP}]=\mathbb{E}[DCost_{TP}].
\]
Recalling the average cost rate
\[
AC_p=\frac{\mathbb{E}[HCost_p]+\mathbb{E}[WCost_p]+\mathbb{E}[RCost_p]+\mathbb{E}[DCost_p]}{\mathbb{E}[L^{R}]}, \quad p=QP, TP, HP,
\]
along with Theorems \ref{thm:AOD} and \ref{thm:AIR comparison},
it can be easily shown that $AC_{QP}\lesssim AC_{HP}\lesssim AC_{TP}$.
\end{proof}

\end{document}